\documentclass[11pt,a4paper]{amsart}
\usepackage[T1]{fontenc}
\usepackage{lmodern}
\usepackage{amsmath,amssymb,mathtools}
\usepackage[margin=28mm]{geometry}
\usepackage{microtype}
\usepackage[hypertexnames=false]{hyperref}
\hypersetup{colorlinks=true,linkcolor=blue,citecolor=blue,urlcolor=blue,
 pdftitle={A sharp pinching theorem for closed minimal Legendrian submanifolds}}
\allowdisplaybreaks[2]
\newtheorem{theorem}{Theorem}[section]
\newtheorem{lemma}[theorem]{Lemma}
\newtheorem{proposition}[theorem]{Proposition}
\theoremstyle{remark}
\newtheorem{remark}[theorem]{Remark}
\newcommand{\R}{\mathbb R}
\newcommand{\C}{\mathbb C}
\newcommand{\Sph}{\mathbb S}
\DeclareMathOperator{\Ric}{Ric}
\DeclareMathOperator{\tr}{tr}
\DeclareMathOperator{\diver}{div}
\DeclareMathOperator{\Sym}{Sym}

\newcommand{\cn}{c_n}

\newcommand{\dd}{\mathrm d}
\title{A sharp pinching theorem for closed minimal Legendrian submanifolds}
\author{Yong Luo}
\address{Yong Luo, Mathematical Science Research Center of Mathematics,
Chongqing University of Technology, Chongqing 400054, P. R. China}
\email{yongluo-math@cqut.edu.cn}
\author{Linlin Sun}
\address{Linlin Sun, School of Mathematics and Computational Science and
Hunan Research Center of the Basic Discipline Fundamental Algorithmic Theory
and Novel Computational Methods, Xiangtan University, Xiangtan 411105,
P. R. China}
\email{sunll@xtu.edu.cn}
\author{Jiabin Yin}
\address{Jiabin Yin, School of Mathematics and Statistics,
Xinyang Normal University, Xinyang 464000, P. R. China}
\email{jiabinyin@126.com}
\date{}
\subjclass[2020]{53C24, 53C40, 53C42}
\keywords{Minimal Legendrian submanifold, sharp pinching, Simons identity,
 symmetric cubic tensor, Calabi product}
\begin{document}
\begin{abstract}
Let $F:M^n\to\Sph^{2n+1}$ be a minimal Legendrian immersion of a
connected closed manifold, where $n\ge3$, and let $B$ be its second
fundamental form. We prove that if $|B|^2\le(n-1)(n+2)/n$, then, up to
a unitary transformation, $F$ parametrizes a totally geodesic sphere
or is a finite covering of the image of the minimal Calabi product of a
point and a totally geodesic Legendrian sphere. This proves Conjecture 1 of Luo and Sun [Ann. Inst. H. Poincaré C Anal. Non Linéaire 40 (2023), p. 548]. 
\end{abstract}
\maketitle

\section{Introduction}
\label{sec:introduction}

The first gap problem for the squared norm of the second fundamental form of closed minimal submanifolds in a sphere is a
classical problem in submanifold geometry. For an $n$-dimensional minimal
submanifold in the unit sphere $\Sph^{n+p}$, Simons' integral inequality
\cite{Simons} gives the pinching constant $n/(2-1/p)$. Chern, do Carmo and Kobayashi
\cite{ChernDoCarmoKobayashi} classified the nonzero equality cases,
whose images are Clifford minimal hypersurfaces for $p=1$ or the
Veronese surface in $\Sph^4$ for $n=p=2$.

This classification shows that Simons' pinching constant is attained
by non-totally-geodesic models only when $p=1$ or $n=p=2$.
It thus motivates the search for sharper estimates in the remaining
higher-codimension cases, where subsequent improvements have
confirmed that Simons' constant is not optimal.
Li and Li
\cite{LiLi} and Chen and Xu \cite{ChenXu} independently improved this constant to $2n/3$ for codimension $p\ge3$. This pinching constant was further improved recently in several preprints (see \cite{Lei, LiZhao, XuZhao}).

In this paper we consider minimal Legendrian submanifolds in
$\Sph^{2n+1}\subset\C^{n+1}$. An immersion
$F:M^n\to\Sph^{2n+1}$ is Legendrian if
$\langle JF,dF(X)\rangle=0$ for every $X\in TM$, where $J$ is the
standard complex structure on $\C^{n+1}$. Let $B$ denote the second
fundamental form of $F$. Legendrian immersions are horizontal with
respect to the Hopf fibration
$\pi:\Sph^{2n+1}\to\mathbb{CP}^n$. Consequently, $\pi\circ F$ is a
Lagrangian immersion with the same induced metric, when
$\mathbb{CP}^n$ is equipped with the Fubini--Study metric of constant
holomorphic sectional curvature $4$. Moreover, $F$ is minimal if and
only if $\pi\circ F$ is minimal. Conversely, a minimal Lagrangian
immersion in $\mathbb{CP}^n$ admits local horizontal Legendrian lifts.
We shall also use this correspondence when discussing pinching results
for minimal Lagrangian submanifolds (see
\cite[Section~2]{CastroLiUrbano}).

For minimal Legendrian submanifolds, early rigidity theorems used both
intrinsic and extrinsic curvature bounds. Blair and
Ogiue \cite{BlairOgiue} proved that a closed minimal Legendrian
submanifold in $\Sph^{2n+1}$ with
$|B|^2<n^2/(2n-1)$ is totally geodesic. Yamaguchi, Kon and Ikawa
\cite{YamaguchiKonIkawa} obtained a Simons-type formula for
$C$-totally real minimal submanifolds of Sasakian space forms and
further norm pinching results. In the unit sphere, Luo and Sun
\cite[Theorem~4.3]{LuoSun} obtained a stronger pinching result that, for
$n\ge3$, the condition
\[
 |B|^2\le\frac{2(n+1)}{3}
\]
forces a closed minimal Legendrian submanifold to be totally geodesic.

Sectional curvature pinching was studied by Yau \cite{Yau}, Blair and
Ogiue \cite{BlairOgiuePositive}, and Ogiue \cite{Ogiue}. In the
normalization used here, the latter two results give total geodesicity
under the condition $K>(n-2)/(2n-1)$, where $K$ denotes the sectional
curvature. Urbano \cite{Urbano} subsequently proved that every compact
minimal Lagrangian submanifold of $\mathbb{CP}^n$ with $K>0$ is
totally geodesic. Dillen and Vrancken \cite{DillenVrancken} considered
the nonnegative case and proved that a closed minimal Legendrian
submanifold in $\Sph^{2n+1}$ with $0\le K\le1$ has $K\equiv0$ or
$K\equiv1$. Cheng, He and Hu \cite{ChengHeHu} later obtained a complete
classification of minimal Legendrian submanifolds with constant
sectional curvature in Sasakian space forms. In particular, for
$n\ge2$, a minimal Legendrian submanifold of the unit sphere
$\Sph^{2n+1}$ with constant sectional curvature is either totally
geodesic ($K=1$) or flat ($K=0$).

For Ricci curvature, Montiel, Ros and Urbano
\cite{MontielRosUrbano} classified closed minimal Lagrangian
submanifolds of $\mathbb{CP}^n(4)$ satisfying
$\Ric\ge\frac34(n-2)g$. Apart from the totally geodesic case and the
flat minimal torus in dimension two, the immersions are locally
congruent to the standard minimal Lagrangian immersions of
$SU(3)/SO(3)$, $SU(3)$, $SU(6)/Sp(3)$ and $E_6/F_4$, in dimensions
$5$, $8$, $14$ and $26$, respectively.

Rigidity results have also been obtained by controlling the second
fundamental form in individual tangent directions. To formulate
such directional pinching conditions, define
\[
 \Theta(p)=\max\{\,|B(v,v)|:v\in T_pM,\ |v|=1\,\}.
\]
For compact minimal Lagrangian submanifolds of $\mathbb{CP}^n(4)$,
Gauchman \cite{Gauchman} proved that
$\Theta^2<(n+1)/(3n)$ implies total geodesicity. Xia \cite{Xia}
obtained the sharp dimension-independent bound $\Theta^2\le1/2$:
the submanifold is totally geodesic or $\Theta^2\equiv1/2$, with the
same flat and homogeneous models in the nongeodesic case. These
statements apply to minimal Legendrian immersions through their
Lagrangian Hopf projections.

For a closed minimal Legendrian surface in $\Sph^5$, Yamaguchi, Kon and
Miyahara \cite{YamaguchiKonMiyahara} proved that $|B|^2\le2$ implies
that $|B|^2$ is identically zero or two. In the latter case the surface
is a flat minimal Legendrian torus, namely the generalized Clifford
torus (see \cite[Remark~1.5]{LuoSun}). Thus the first gap and its equality case were already known in
dimension two, whereas the corresponding sharp first gap problem
in dimensions $n\ge3$ remained open.

A related variational problem concerns Legendrian submanifolds
that are critical points of the volume functional under contact
deformations \cite{CastroLiUrbano,Iriyeh}. Such submanifolds are called
contact stationary Legendrian (CSL) submanifolds and satisfy
$\diver(JH)=0$, where we use the convention
$H=\tr_g B$ for the mean curvature vector. This notion was motivated by Hamiltonian minimal Lagrangian submanifolds introduced by Oh \cite{Oh}. Every minimal
Legendrian submanifold is CSL. Under the Hopf fibration, the CSL
condition corresponds to Hamiltonian stationarity of the Lagrangian
projection \cite{CastroLiUrbano}. Luo and Sun \cite{LuoSun} studied the
first gap problem in this larger class and obtained optimal rigidity
theorems for nonminimal closed CSL submanifolds in the unit sphere. They proved the following theorem.
\begin{theorem}[\cite{LuoSun}]\label{thm:LuoSun}
Suppose $M^n (n\ge3)$ is a closed contact stationary Legendrian  submanifold of $\mathbb{S}^{2n+1}$ and
\begin{equation*}
|B|^2\leq\dfrac{(n-1)(n+2)}{n}+\dfrac{n^2+3n-2}{2n^2}|H|^2-\dfrac{(n-1)(n-2)|H|\sqrt{4n+|H|^2}}{2n^2}.
\end{equation*}
Then $M$ is either minimal or a Calabi product Legendrian immersion of a totally geodesic Legendrian immersion and a point.
\end{theorem}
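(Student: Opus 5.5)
The plan is to follow the Simons-identity approach for contact stationary Legendrian submanifolds, keeping track of the mean curvature terms. Write $\sigma(X,Y,Z)=\langle B(X,Y),JZ\rangle$. On a Legendrian submanifold this is a totally symmetric cubic tensor, and its trace is $\langle H,J\cdot\rangle$. Decompose $\sigma=\mathring\sigma+\frac{3}{n+2}\,\mathrm{Sym}(g\otimes \eta)$, where $\eta$ is the one-form dual to $JH$ and $\mathring\sigma$ is trace-free. Then
\[
|B|^2=|\mathring\sigma|^2+\frac{3}{n+2}|H|^2 .
\]
Using the Gauss, Codazzi and Ricci equations in $\Sph^{2n+1}$, I will compute $\frac12\Delta|\mathring\sigma|^2$. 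The CSL condition $\diver(JH)=0$ is used to integrate by parts the terms in which $\nabla H$ appears. This produces an integral identity of the form
\[
0=\int_M |\nabla\mathring\sigma|^2-c|\nabla H|^2+(n+1)|\mathring\sigma|^2-Q(\mathring\sigma)+(\text{terms in }H,\mathring\sigma),
\]
where $Q$ is the quartic expression $\sum|[A_i,A_j]|^2+\sum\langle A_i,A_j\rangle^2$ built from the shape operators $A_i$.

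The key algebraic step is a sharp pointwise estimate for the quartic and cubic terms on trace-free symmetric cubic tensors. It has the form
\[
Q(\mathring\sigma)\le \frac{n+1}{?}\cdots\le \frac{n}{(n-1)(n+2)}\,(n+1)\,|\mathring\sigma|^4 ,
\]
together with a bound of type $|\langle \mathring\sigma, \eta\otimes\mathring\sigma\rangle|\le \frac{n-2}{\sqrt{n(n-1)}}|H||\mathring\sigma|^3$ for the mixed cubic term. Equality in both should hold exactly for tensors of Calabi-product type: $\mathring\sigma$ determined by one unit vector $e_1$, with $\sigma(e_1,e_1,e_1)=\lambda$ and $\sigma(e_1,e_i,e_i)=\mu$. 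To prove these I plan to diagonalize with respect to a maximizing direction, as in Xia's and Luo--Sun's directional arguments, and then run Lagrange multipliers.

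The term $\int|\nabla\mathring\sigma|^2$ must also absorb the $|\nabla H|^2$ term. For this I will use a Kato-type inequality: since $\sigma$ is Codazzi and symmetric, its gradient satisfies
\[
|\nabla\sigma|^2\ge \frac{3(\text{const})}{n+2}|\nabla H|^2 .
\]
With this, the pointwise estimates reduce the identity to
\[
0\ge\int_M |\mathring\sigma|^2\,P(|\mathring\sigma|,|H|),
\]
where $P$ is a quadratic polynomial in $|\mathring\sigma|$. The stated bound on $|B|^2$ is precisely $P\ge0$; the square root term comes from solving this quadratic in $|\mathring\sigma|$.

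Under the hypothesis, every inequality used above must therefore be an equality. This gives:
\begin{enumerate}
\item either $\mathring\sigma\equiv0$, or $\mathring\sigma$ has Calabi-product algebraic type;
\item $\nabla H$ satisfies the equality case of the Kato inequality;
\item the mixed term attains its bound, so $JH$ is aligned with the distinguished direction $e_1$.
\end{enumerate}
Using Codazzi equations and an analysis of the distribution $e_1^\perp$, I will then show that $|H|$ is constant and that $e_1^\perp$ is totally geodesic and integrable. If $H=0$ the submanifold is minimal. Otherwise the structure forces $M$ to be a Calabi product of a point and a totally geodesic Legendrian sphere.

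I expect the main obstacle to be the sharp algebraic inequality with the mixed $H$-terms. Its precise constants must combine into the exact square-root expression in the hypothesis, and its equality cases must be characterized.
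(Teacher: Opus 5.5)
The paper only quotes this theorem from Luo--Sun, so your plan has to stand on its own, and it has a genuine gap: the sharp quartic estimate is false. The inequality $Q(\mathring\sigma)\le\frac{n(n+1)}{(n-1)(n+2)}|\mathring\sigma|^4$, with Calabi equality, fails for every $n\ge3$.

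Take $\sigma_{111}=1$, $\sigma_{122}=\sigma_{212}=\sigma_{221}=-1$, and all other components zero. This is a trace-free symmetric cubic on $\R^n$ with $|\sigma|^2=4$. Its slices are $A_1=\operatorname{diag}(1,-1,0,\dots,0)$, $(A_2)_{12}=(A_2)_{21}=-1$ (other entries zero), and $A_k=0$ for $k\ge3$. Then $G=\operatorname{diag}(2,2,0,\dots,0)$, $|G|^2=8$ and $\sum_{i,j}|[A_i,A_j]|^2=16$. Hence $Q=24=\frac32|\sigma|^4$, whereas $\frac{n(n+1)}{(n-1)(n+2)}<\frac32$ because $(n+3)(n-2)>0$.

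For this cubic, $(n+1)|\mathring\sigma|^2-Q<0$ once $|\mathring\sigma|^2>2(n+1)/3$. Your hypothesis allows $|\mathring\sigma|^2$ up to nearly $\cn$, and $\cn>2(n+1)/3$, wherever $|H|$ is small or zero; a nonminimal $M$ may have zeros of $H$. There the mixed terms cannot compensate, so no pointwise polynomial $P$ coming from a Simons identity is nonnegative under the hypothesis.

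There is also a consistency check. With $H\equiv0$ your argument would prove Conjecture 1 directly from Simons' identity. The present paper instead needs the Gram identity \eqref{eq:2.5} and the degree-six estimate of Proposition~\ref{prop:sextic}. Moreover, the theorem leaves the minimal case unclassified, which already signals that its proof extracts nothing when $H=0$.

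The missing idea is a Bochner argument for $JH$, not a Simons identity. Since $\nabla\sigma$ is completely symmetric without any minimality assumption, the one-form $\omega_k=\sum_i\sigma_{iik}=\langle H,Je_k\rangle$ satisfies $\omega_{k;l}=\omega_{l;k}$. So it is closed for every Legendrian immersion, and the CSL condition says it is co-closed. Hence
\[
 0=\int_M\{|\nabla\omega|^2+\Ric(\omega^\sharp,\omega^\sharp)\}.
\]
Write $\sigma=\mathring\sigma+\frac1{n+2}\delta\omega$ with $(\delta\omega)_{ijk}=g_{ij}\omega_k+g_{ik}\omega_j+g_{jk}\omega_i$, let $\mathring G$ be the Gram tensor of $\mathring\sigma$, and put $e=\omega^\sharp/|\omega|$. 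The Gauss equation then gives
\[
 \Ric(e,e)=(n-1)+\frac{2(n-1)}{(n+2)^2}|H|^2+\frac{n-2}{n+2}|H|\,\mathring\sigma(e,e,e)-\mathring G(e,e).
\]
The decomposition \eqref{eq:2.1} yields $|\mathring\sigma(e,e,e)|\le\sqrt{(n-1)/(n+2)}\,|\mathring\sigma|$ and $\mathring G(e,e)\le\frac n{n+2}|\mathring\sigma|^2$. Both bounds are attained by the same Calabi cubic with axis $e$. The resulting lower bound is a decreasing quadratic in $|\mathring\sigma|$. It is nonnegative exactly when $|B|^2=|\mathring\sigma|^2+\frac3{n+2}|H|^2$ satisfies the stated bound, and its root is where $\sqrt{4n+|H|^2}$ comes from.

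Thus the hypothesis gives $\Ric(\omega^\sharp,\omega^\sharp)\ge0$, hence $\nabla\omega=0$. Either $H\equiv0$, or $|H|$ is a positive constant and $\Ric(e,e)\equiv0$. In the latter case, equality forces $\sigma$ into Calabi form with the parallel axis $e$, and integrating the ambient equations as in Section~\ref{sec:rigidity} gives the Calabi product. Your Kato inequality and your separate argument that $|H|$ is constant then become unnecessary.
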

\begin{remark}
Recently, Cui and Sun \cite{CuiSun} obtained a topological sphere theorem for Legendrian submanifolds in the unit sphere under the same pinching condition as in Theorem \ref{thm:LuoSun}.
\end{remark}
The minimal Calabi product of a point and a totally geodesic Legendrian
sphere  is called the
Calabi torus, and its second fundamental form (see the appendix of \cite{LuoSun})
satisfies
\[
 |B|^2=\cn:=\frac{(n-1)(n+2)}{n}.
\]
 Motivated by this example and the above theorem, Luo and Sun \cite[Conjecture~1]{LuoSun} conjectured that a
closed minimal Legendrian submanifold in $\Sph^{2n+1}$ satisfying
$|B|^2\le\cn$ must either be totally geodesic or be a Calabi torus.

Luo, Sun and Yin \cite[Theorem~1.1]{LuoSunYin} obtained a
characterization of the Calabi torus in terms of $|B|^2$ and the
directional quantity $\Theta$ defined above. They proved that a closed
minimal Legendrian submanifold satisfying
\[
 |B|^2\le\frac{n+2}{\sqrt n}\Theta
\]
is totally geodesic or a Calabi torus. By using ideas from Lu's paper \cite{Lu}, Wu and Yang \cite{WuYang} proved the same rigidity conclusion under
the condition $|B|^2+\lambda_2\le n+1$, where $\lambda_2$ is the second
largest eigenvalue of the fundamental matrix. Both conditions involve directional information about the second
fundamental form in addition to its norm.

A related sharp rigidity result was obtained by Hu, Sun and Yin
\cite{HuSunYin} for connected closed Lagrangian submanifolds of the
homogeneous nearly K\"ahler sphere $\Sph^6(1)$. Writing $h$ for the
second fundamental form, they proved that $|B|^2\le25/8$ forces the submanifold to be totally
geodesic or congruent to the Berger sphere of
Dillen--Verstraelen--Vrancken. Their proof combines a weighted Simons
integral identity with an integral identity involving covariant
derivatives of the Ricci tensor. We use a similar integral strategy
in the minimal Legendrian setting.

The following theorem proves Conjecture 1 of Luo and Sun \cite{LuoSun} mentioned above.

\begin{theorem}\label{thm:main}
Let $F:M^n\to\Sph^{2n+1}\subset\C^{n+1}$ be a minimal Legendrian
immersion of a connected closed manifold, $n\ge3$. If $|B|^2\le\cn$
everywhere, then, up to a unitary transformation, $F$ parametrizes a
totally geodesic sphere or is a finite covering of the embedded image
\begin{equation}\tag{1.1}\label{eq:1.1}
 (\theta,x)\longmapsto
 \left(\frac{e^{in\theta}}{\sqrt{n+1}},
       \sqrt{\frac n{n+1}}e^{-i\theta}x\right),
 \qquad(\theta,x)\in\Sph^1\times\Sph^{n-1}.
\end{equation}
Here $\Sph^1=\R/(2\pi\mathbb Z)$. For odd $n$ the displayed map is
an embedding; for even $n$ it induces an embedding on the quotient by
$(\theta,x)\sim(\theta+\pi,-x)$.
\end{theorem}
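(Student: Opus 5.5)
The plan is to combine the Simons identity for the fully symmetric cubic form with a second integral identity involving the covariant derivative of the Ricci tensor, following the strategy of Hu, Sun and Yin \cite{HuSunYin}.

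\textbf{Setup.} For a minimal Legendrian immersion the cubic form $\sigma(X,Y,Z)=\langle B(X,Y),JZ\rangle$ is totally symmetric and trace-free. By Codazzi, $\nabla\sigma$ is also totally symmetric. The Gauss equation gives
\[
R_{ijkl}=\delta_{ik}\delta_{jl}-\delta_{il}\delta_{jk}+\sum_m(\sigma_{ikm}\sigma_{jlm}-\sigma_{ilm}\sigma_{jkm}),
\qquad
\Ric_{ij}=(n-1)\delta_{ij}-\sum_{k,l}\sigma_{ikl}\sigma_{jkl}.
\]

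\textbf{Step 1 (Simons identity).} Write $S=|\sigma|^2=|B|^2$. I would use the Simons identity
\[
\tfrac12\Delta S=|\nabla\sigma|^2+(n+1)S-\Big(\sum_{i,j}|[A_i,A_j]|^2+\sum_{i,j}\langle A_i,A_j\rangle^2\Big),
\]
where $A_i=(\sigma_{ijk})_{j,k}$ is a symmetric, trace-free matrix.

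\textbf{Step 2 (pointwise algebra).} The key algebraic step is a sharp estimate for the quartic term $Q(\sigma)$ on trace-free symmetric cubic tensors. The target has the form
\[
Q(\sigma)\le \frac{n+2}{n}\cdot\frac{n+1}{n-1}\,S\cdot(\text{something})+\text{a correction measured by }|\mathring{\Ric}|^2\text{-type quantities}.
\]
Equality should hold exactly for the Calabi-type tensor, i.e.\ $\sigma$ determined by a unit vector $e_1$ with $\sigma_{111}=\lambda$ and $\sigma_{1jj}=-\lambda/(n-1)$, up to scaling. A bare bound $Q\le cS^2$ only recovers $2(n+1)/3$ \cite{LuoSun}, so a pure norm estimate cannot suffice. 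The remainder must be absorbed by a gradient term.

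\textbf{Step 3 (second integral identity).} I would derive an integral identity from $\diver$ of suitable one-forms built from $\nabla\Ric$, or equivalently from integrating $|\nabla\sigma|^2$ against a weighted Kato-type decomposition of $\nabla\sigma$. Concretely, I would split $\nabla\sigma$ into its component along the Calabi-type directions and the orthogonal complement. The relevant quantities are
\[
\int\sum_{i,j,k}(\nabla_k\Ric_{ij})^2
\quad\text{and}\quad
\int|\nabla S|^2/S.
\]
These should be related to $\int|\nabla\sigma|^2$ via the contracted second Bianchi identity, which for minimal Legendrian submanifolds reads $\nabla_i\Ric_{ij}=\tfrac12\nabla_j(\text{scal})=-\tfrac12\nabla_jS$. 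Adding this identity, with an appropriate constant, to the integrated Simons identity should yield
\[
0\ge\int\Big[(c_n-S)\,\Phi(\sigma)+\text{nonnegative gradient terms}\Big],
\]
where $\Phi\ge0$ vanishes only when $\sigma=0$ or $\sigma$ is of Calabi type. I expect this to be the main obstacle: choosing the right weights so that the algebraic inequality is sharp exactly at the Calabi tensor, handled via a Lagrange-multiplier analysis of $Q$ restricted to the sphere $S=1$ with fixed $|\mathring\Ric|$. My first attempt would be to test the candidate inequality on the family $\sigma=a\,\sigma_{\mathrm{Cal}}+b\,\tau$, with $\tau$ orthogonal to the Calabi orbit.

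\textbf{Step 4 (rigidity).} Under $S\le c_n$, all terms must vanish. Then either $S\equiv0$, giving the totally geodesic case, or $\sigma$ is everywhere of Calabi type with $S\equiv c_n$ and the prescribed gradient constraints. In the second case, $e_1$ becomes a globally defined parallel (or Killing) unit field and the distribution $e_1^\perp$ is totally geodesic and umbilical. The known classification then applies: either the characterization in \cite[Theorem~1.1]{LuoSunYin}, since here $|B|^2=\frac{n+2}{\sqrt n}\Theta$ holds, or Theorem~\ref{thm:LuoSun} with $H=0$ combined with the local Calabi-product structure. This identifies $F$, up to a unitary transformation, as a finite cover of \eqref{eq:1.1}. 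The statement about embeddedness for odd or even $n$ is a direct check of when $(\theta,x)$ and $(\theta',x')$ have the same image: this requires $e^{in\theta}=e^{in\theta'}$ and $e^{-i\theta}x=e^{-i\theta'}x'$, which forces $\theta'-\theta\in\pi\mathbb Z$ with $x'=\pm x$ and the parity condition.
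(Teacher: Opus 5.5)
Your overall architecture (Simons identity, a second integral identity built from derivatives of the Ricci tensor, then rigidity at the Calabi cubic) is the right one. However, Steps 2 and 3 carry the whole theorem, and you leave them as expectations: neither the second identity nor the algebraic inequality is actually formulated.

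The identity that works is the commutator formula \eqref{eq:2.3} applied to the Gram tensor $G=(n-1)g-\Ric$:
$\int_M\sum_{i<j}K_{ij}(g_i-g_j)^2=\int_M\{|\diver G|^2-\nabla_iG_{jk}\nabla_jG_{ik}\}$.
Here $\diver G=\tfrac12\dd S$, which is all the contracted Bianchi identity gives you. The curvature side expands by Gauss into $n|G|^2-S^2-\tfrac12\sum_a|[G,A_a]|^2-|V|^2$, where $V_a=\sigma_{ija}G_{ij}$.

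The derivative term is therefore the twisted contraction $\nabla_iG_{jk}\nabla_jG_{ik}$, not $|\nabla\Ric|^2$, and it has to be dominated. The paper adds $t$ times the Simons identity multiplied by $S$ (not divided by $S$). It then absorbs the twisted term into $tS|\nabla\sigma|^2$ once $t\ge n/(2(n+2))$. This uses the sharp Gram bound $G\le\frac{n}{n+2}Sg$ of Lemma~\ref{lem:gram} and a Bessel-type estimate (Lemma~\ref{lem:derivative}). Your ``nonnegative gradient terms'' come with no such argument.

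The zeroth-order part is also not of the form $(\cn-S)\Phi(\sigma)$. With that factorization, vanishing only tells you that at each point $S=\cn$ or $\sigma$ is of Calabi type (or zero). That does not give ``Calabi type with $S\equiv\cn$''. The paper instead splits it, as in \eqref{eq:2.7}, into two separately nonnegative pieces:
\begin{itemize}
\item the deficit $(1-S/\cn)\bigl(n|G|^2+(t(n+1)-1)S^2\bigr)$, which is positive for $0<S<\cn$ and forces $S\equiv\cn$;
\item a homogeneous sextic $\mathcal B_t$ in $SQ$, $S|G|^2$, $S^3$ and the two curvature terms $-\tfrac12\sum_a|[G,A_a]|^2$ and $-|V|^2$.
\end{itemize}
So a bound on the quartic $Q$ with a traceless-Ricci correction, as in your Step 2, is not the needed statement. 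The traceless term $n|G|^2-S^2$ does appear, but alongside those two negative terms, which do not vanish at the Calabi cubic.

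Proving $\mathcal B_t\ge0$ for all trace-free symmetric cubics, with equality at $S>0$ only when $\lambda_{\max}(G)=\frac{n}{n+2}S$, is the bulk of the paper. It uses projections onto trace-free symmetric $3$- and $4$-tensors, a case split in $\lambda_{\max}(G)/S$, and a spectral gap for $G$ acting on $\Sym^3_0(\R^n)$ when $n\ge7$. For $n=3$ it uses the identity $\mathcal B_{8/25}=9\det(\tfrac35Sg-G)$, with $t=8/25$ rather than $3/10$. Testing on a family $a\sigma_{\mathrm{Cal}}+b\tau$ or a Lagrange-multiplier heuristic does not replace this.

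Your Step 4 is fine once Calabi type with $S=\cn$ holds everywhere. Then $\Theta=(n-1)/\sqrt n$ and $|B|^2=\frac{n+2}{\sqrt n}\Theta$, so the Luo--Sun--Yin characterization applies. The paper instead proves directly via Codazzi that the axis is parallel (Lemma~\ref{lem:axis}) and integrates the ambient equations. Your alternative via Theorem~\ref{thm:LuoSun} with $H=0$ is vacuous, however. Its conclusion ``minimal or a Calabi product'' says nothing about an immersion already known to be minimal.
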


Together with the surface case established by Yamaguchi, Kon and Miyahara \cite{YamaguchiKonMiyahara}, Theorem \ref{thm:main} gives the first gap
for closed minimal Legendrian submanifolds of the unit sphere in all dimensions $n\ge2$.

Our proof is based on an integral argument. The Legendrian condition
allows us to express $B$ as a symmetric cubic tensor
$\sigma(X,Y,Z)=\langle B(X,Y),JZ\rangle$, which is trace-free when
the immersion is minimal. We use the associated Gram tensor
$G_{ij}=\sum_{a,b}\sigma_{iab}\sigma_{jab}$. Combining a weighted
Simons identity \eqref{eq:2.2} with an integral curvature identity \eqref{eq:2.5} for $G$, we reduce
the problem to an algebraic inequality for trace-free symmetric cubics.
The equality case in this inequality determines the cubic tensor of the
Calabi immersion. The Codazzi equation then implies that its
distinguished tangent direction is parallel, and the ambient equations
give the immersion in \eqref{eq:1.1}.

The paper is organized as follows. In Section~\ref{sec:integral} we
establish the integral identity and state the algebraic estimate needed
in the proof. The  algebraic estimate is proved in
Sections~\ref{sec:algebraic}--\ref{sec:upper}.
In Section~\ref{sec:rigidity} we discuss the equality case and complete
the proof of Theorem~\ref{thm:main}.

\section{The cubic tensor and the integral identity}\label{sec:integral}

We express the second fundamental form as a trace-free symmetric cubic
tensor and establish a sharp bound for its Gram tensor. A weighted
Simons identity and an integral curvature identity then reduce the
pinching problem to a homogeneous algebraic inequality of degree six.
The algebraic estimate is stated at the end of this section and proved
in Sections~\ref{sec:algebraic}--\ref{sec:upper}.

Let $g$ be the induced metric, $J$ the standard complex structure, and $D$ the
flat connection on $\mathbb C^{n+1}$. We identify a tangent vector with its image
under $dF$ when it occurs in an ambient inner product. The Legendrian condition
is $\langle JF,X\rangle=0$ for every tangent vector $X$. Taking the exterior
derivative of this vanishing one-form gives $2\langle JX,Y\rangle=0$. Hence
$\langle B(X,Y),JF\rangle=-\langle Y,JX\rangle=0$. Differentiating
$\langle Y,JZ\rangle=0$ shows that
\[
 \sigma(X,Y,Z)=\langle B(X,Y),JZ\rangle
\]
is symmetric in all three entries. We use ordered-index norms and the summation
convention, and put
\[
 S=|\sigma|^2=|B|^2,\qquad G_{ij}=\sigma_{iab}\sigma_{jab},\qquad
 A_i=(\sigma_{ijk})_{j,k},\qquad
 \alpha=\frac{n}{n+2},\qquad \tau_n=\frac{n}{2(n+2)}.
\]
Here closed means compact without boundary, and all integrals use the
Riemannian volume measure. Our conventions are $\Delta=\operatorname{tr}\nabla^2$
and
$R(X,Y)Z=\nabla_X\nabla_YZ-\nabla_Y\nabla_XZ-\nabla_{[X,Y]}Z$.
Thus the sectional curvature of the unit sphere is $1$. The normal derivative
satisfies
\[
 (\nabla^{\perp}_{e_l}B)(e_i,e_j)=\sigma_{ijk;l}Je_k-\sigma_{ijl}JF.
\]
The $Je_k$ components of Codazzi and the minimality condition give
\[
 \sigma_{ijk;l}=\sigma_{ljk;i},\qquad
 \sum_i\sigma_{iik}=\sum_i\sigma_{iik;l}=0.
\]
Together with the symmetry of $\sigma$, these identities say that $\nabla\sigma$
is completely symmetric and trace-free. The Gauss equation is
\[
 K_{ij}=1+\sum_a(\sigma_{iia}\sigma_{jja}-\sigma_{ija}^2),\qquad
 \operatorname{Ric}=(n-1)g-G,\qquad \operatorname{tr}G=S.
\]

\begin{lemma}\label{lem:gram}
Every trace-free symmetric cubic on $\mathbb R^n$, $n\ge3$, satisfies
$0\le G\le\alpha Sg$. If $S>0$ and $G(e,e)=\alpha S$ for a unit vector
$e=e_1$, then
\[
 \sigma_{111}=a,\qquad \sigma_{1\beta\gamma}=-\frac{a}{n-1}\delta_{\beta\gamma},
 \qquad \sigma_{11\beta}=\sigma_{\beta\gamma\delta}=0,\qquad
 S=\frac{n+2}{n-1}a^2,
\]
where Greek indices are greater than one. In this equality case, the largest
Gram eigenvalue is simple. In particular, $S\le c_n$ implies
$\operatorname{Ric}\ge0$.
\end{lemma}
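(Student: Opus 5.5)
The plan is to prove the bound $G(e,e)\le\alpha S$ one unit vector at a time, by decomposing $\sigma$ with respect to $e=e_1$ and its orthogonal complement. Nonnegativity $G\ge0$ needs no work, since $G(v,v)=|\sigma(v,\cdot,\cdot)|^2\ge0$. Fix an orthonormal frame with $e_1=e$, and use Greek indices for $2,\dots,n$. Set
\[
a=\sigma_{111},\qquad b_\beta=\sigma_{11\beta},\qquad C_{\beta\gamma}=\sigma_{1\beta\gamma},\qquad D_{\beta\gamma\delta}=\sigma_{\beta\gamma\delta}.
\]
Here $C$ is a symmetric matrix and $D$ is a symmetric cubic on $\R^{n-1}$. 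Counting ordered index positions gives
\[
G_{11}=a^2+2|b|^2+|C|^2,\qquad S=a^2+3|b|^2+3|C|^2+|D|^2 .
\]
The trace-free condition $\sum_i\sigma_{iik}=0$ splits into two parts: $k=1$ gives $a+\tr C=0$, and $k=\beta$ gives $b_\beta+\sum_\gamma D_{\gamma\gamma\beta}=0$.

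Multiplying by $n+2$, the inequality $G_{11}\le\alpha S$ becomes
\[
2a^2+(4-n)|b|^2-(2n-2)|C|^2-n|D|^2\le0 .
\]
I will use two elementary lower bounds.

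First, Cauchy--Schwarz on the diagonal of $C$ gives $|C|^2\ge(\tr C)^2/(n-1)=a^2/(n-1)$. Equality holds iff $C=-\frac{a}{n-1}I$.

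Second, a symmetric cubic $D$ on $\R^{m}$, $m=n-1$, with trace vector $t_\beta=\sum_\gamma D_{\gamma\gamma\beta}$ satisfies $|D|^2\ge\frac{3}{m+2}|t|^2$. To see this, orthogonally decompose $D$ into its trace-free part and the pure-trace cubic $\frac{3}{m+2}\,\Sym(t\otimes g)$ (normalized so that its trace is $t$). The norm of the pure-trace part is computed directly to be $\frac{3}{m+2}|t|^2$. Equality holds iff $D$ is pure trace.

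Here $t=-b$, so $|D|^2\ge\frac{3}{n+1}|b|^2$. Substituting both bounds, the $a^2$ terms cancel exactly. The left-hand side is then at most
\[
\Bigl(4-n-\tfrac{3n}{n+1}\Bigr)|b|^2=\frac{4-n^2}{n+1}\,|b|^2\le0 .
\]
The coefficient is strictly negative for $n\ge3$, which proves $G\le\alpha Sg$.

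For the equality case, suppose $S>0$ and $G_{11}=\alpha S$. Then every inequality used must be an equality. The strict negativity of the coefficient forces $b=0$. Then $|D|^2=0$ (since $n|D|^2$ appears with a negative sign and all other terms already vanish), so $D=0$. Finally $C=-\frac{a}{n-1}I$. This gives exactly the stated normal form, and $S=a^2+3a^2/(n-1)=\frac{n+2}{n-1}a^2$. Note that $a\neq0$ because $S>0$.

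For simplicity of the top eigenvalue, compute $G$ for this normal form. It is diagonal, with
\[
G_{11}=a^2+\frac{a^2}{n-1},\qquad G_{\beta\beta}=\frac{2a^2}{(n-1)^2},
\]
and $G_{1\beta}=\sigma_{1ab}\sigma_{\beta ab}=0$ because $b=0$ and $D=0$. Since $G_{\beta\beta}<G_{11}$ for $n\ge3$, the largest eigenvalue is simple.

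Lastly, $\alpha\cn=\frac{n}{n+2}\cdot\frac{(n-1)(n+2)}{n}=n-1$. So if $S\le\cn$, then
\[
\Ric=(n-1)g-G\ge(n-1-\alpha S)g\ge0 .
\]
The only step that is not routine bookkeeping is the trace-part estimate $|D|^2\ge\frac{3}{m+2}|t|^2$. I would verify it by computing $|\Sym(t\otimes g)|^2$ explicitly and checking that it is orthogonal to trace-free cubics.
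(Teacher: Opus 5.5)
Your proof is correct and is essentially the paper's argument. The paper uses the same splitting relative to $e_1$, but writes $C$ and $D$ as their forced trace parts plus trace-free remainders $C_0,D_0$, which gives the exact identity $\alpha S-G_{11}=\frac{n-2}{n+1}|b|^2+\frac{2(n-1)}{n+2}|C_0|^2+\frac{n}{n+2}|D_0|^2$; the last two terms are precisely the slacks in your Cauchy--Schwarz and trace-projection bounds, so your inequality chain and your equality analysis are the same computation arranged differently.
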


\begin{proof}
Fix an arbitrary unit vector $e_1$. The trace equations give the orthogonal
decomposition
\[
\begin{aligned}
 \sigma_{111}&=a,&\qquad \sigma_{11\beta}&=u_\beta,\\
 \sigma_{1\beta\gamma}&=-\frac{a}{n-1}\delta_{\beta\gamma}+C_{\beta\gamma},\\
 \sigma_{\beta\gamma\delta}
 &=D_{\beta\gamma\delta}
   -\frac{u_\beta\delta_{\gamma\delta}+u_\gamma\delta_{\beta\delta}
          +u_\delta\delta_{\beta\gamma}}{n+1},
\end{aligned}
\]
with $C,D$ trace-free on $e_1^\perp$. Counting the placements of the index $1$,
and using that the squared norm of the three metric insertions of $u$ is
$3(n+1)|u|^2$, gives
\begin{equation}\tag{2.1}\label{eq:2.1}
\begin{aligned}
 S&=\frac{n+2}{n-1}a^2+\frac{3(n+2)}{n+1}|u|^2+3|C|^2+|D|^2,\\
 G_{11}&=\frac{n}{n-1}a^2+2|u|^2+|C|^2.
\end{aligned}
\end{equation}
Consequently,
\[
 \alpha S-G_{11}
 =\frac{n-2}{n+1}|u|^2+\frac{2(n-1)}{n+2}|C|^2+\frac{n}{n+2}|D|^2.
\]
The nonnegative terms prove the bound and show that equality is equivalent to
$u=C=D=0$. At equality, $G_{11}=na^2/(n-1)$ and
$G_{\beta\beta}=2a^2/(n-1)^2$, so the top eigenvalue is simple. Finally,
$\operatorname{Ric}(e,e)=\alpha(c_n-S)+\alpha S-G(e,e)$.
\end{proof}

Set
\[
 Q=|G|^2+\sum_{i,j}|[A_i,A_j]|^2,\qquad V_a=\sigma_{ija}G_{ij}.
\]
At a point, choose a geodesic orthonormal frame and write
\[
 R_{lijk}:=\langle R(e_l,e_i)e_j,e_k\rangle
 =\delta_{ij}\delta_{lk}-\delta_{lj}\delta_{ik}
  +\sigma_{ija}\sigma_{lka}-\sigma_{lja}\sigma_{ika}.
\]
The Codazzi equation and $\nabla_l\sigma_{ljk}=0$ give
\[
\begin{aligned}
 \Delta\sigma_{ijk}
 &=\sum_l(\nabla_l\nabla_i-\nabla_i\nabla_l)\sigma_{ljk}\\
 &=\operatorname{Ric}_{ia}\sigma_{ajk}
   -\sum_{l,a}R_{lija}\sigma_{lak}
   -\sum_{l,a}R_{lika}\sigma_{lja}.
\end{aligned}
\]
For example, the first curvature contraction is
\[
 \sum_{l,a}R_{lija}\sigma_{lak}
 =-\sigma_{ijk}+G_{ka}\sigma_{ija}-\sigma_{iab}\sigma_{jbc}\sigma_{kca}.
\]
The other is obtained by interchanging $j,k$. The Ricci term then gives the
tensor Simons identity \cite[Lemma~2.7]{WuYang}:
\[
 \Delta\sigma_{ijk}
 =(n+1)\sigma_{ijk}-G_{ia}\sigma_{ajk}-G_{ja}\sigma_{aik}
 -G_{ka}\sigma_{ija}+2\sigma_{iab}\sigma_{jbc}\sigma_{kca}.
\]
Contracting with $\sigma$, and using
\[
 \sum_i A_i^2=G,\qquad
 \sum_{i,j}|[A_i,A_j]|^2
 =2|G|^2-2\sum_{i,j}\operatorname{tr}(A_iA_jA_iA_j),
\]
gives
\[
 \tfrac12\Delta S=|\nabla\sigma|^2+(n+1)S-Q.
\]
The coefficient $n+1$ also follows from the usual normal-bundle formula and
$|\nabla^\perp B|^2=|\nabla\sigma|^2+S$. Multiplying the scalar identity by $S$ and integrating by parts gives
\begin{equation}\tag{2.2}\label{eq:2.2}
 0=\int_M\left\{S|\nabla\sigma|^2+\tfrac12|\nabla S|^2+(n+1)S^2-SQ\right\}.
\end{equation}

For any symmetric two-tensor $T$ on a closed manifold, write $t_i$ for its
eigenvalues in a pointwise orthonormal eigenframe. Integration by parts gives
\begin{equation}\tag{2.3}\label{eq:2.3}
 \int_M\sum_{i<j}K_{ij}(t_i-t_j)^2
 =\int_M\left\{|\operatorname{div}T|^2-\nabla_iT_{jk}\nabla_jT_{ik}\right\}.
\end{equation}
Indeed, the right-hand side is
\[
 -\int_M T_{ik}([\nabla_i,\nabla_j]T)_{jk}.
\]
In a pointwise eigenframe the commutator is
\[
 ([\nabla_i,\nabla_j]T)_{ji}=K_{ij}(t_j-t_i).
\]
Pairing the terms indexed by $i,j$ proves the formula. For $T=G$, complete
symmetry gives
\[
 \nabla_iG_{ij}=\sigma_{iab}\sigma_{jab;i}
              =\sigma_{iab}\sigma_{iab;j}=\tfrac12S_{;j}.
\]
If $g_i$ are the Gram eigenvalues, the Gauss equation gives
\begin{equation}\tag{2.4}\label{eq:2.4}
 \sum_{i<j}K_{ij}(g_i-g_j)^2
 =n|G|^2-S^2-\tfrac12\sum_a|[G,A_a]|^2-|V|^2.
\end{equation}
To verify the extrinsic terms, use
\[
\begin{aligned}
 \sum_{i<j}(g_i-g_j)^2\sigma_{iia}\sigma_{jja}
 &=-\left(\sum_i g_i\sigma_{iia}\right)^2,\\
 \sum_{i<j,a}(g_i-g_j)^2\sigma_{ija}^2
 &=\tfrac12\sum_a|[G,A_a]|^2.
\end{aligned}
\]
The first identity uses $\sum_i\sigma_{iia}=0$.
Equations \eqref{eq:2.3} and \eqref{eq:2.4} imply
\begin{equation}\tag{2.5}\label{eq:2.5}
 0=\int_M\left\{
 \nabla_iG_{jk}\nabla_jG_{ik}-\tfrac14|\nabla S|^2+n|G|^2-S^2
 -\tfrac12\sum_a|[G,A_a]|^2-|V|^2\right\}.
\end{equation}
For $t>0$, define
\begin{equation}\tag{2.6}\label{eq:2.6}
\begin{aligned}
 \mathcal D_t&=tS|\nabla\sigma|^2+\nabla_iG_{jk}\nabla_jG_{ik}
       +\frac{2t-1}{4}|\nabla S|^2,\\
 \mathcal B_t&=\frac{n}{c_n}S|G|^2+\frac{t(n+1)-1}{c_n}S^3-tSQ
       -\tfrac12\sum_a|[G,A_a]|^2-|V|^2.
\end{aligned}
\end{equation}
Adding $t$ times \eqref{eq:2.2} to \eqref{eq:2.5}, and separating the norm
deficit, gives
\begin{equation}\tag{2.7}\label{eq:2.7}
 0=\int_M\left\{
 \mathcal D_t+\mathcal B_t+\left(1-\frac{S}{c_n}\right)
 \bigl(n|G|^2+(t(n+1)-1)S^2\bigr)\right\}.
\end{equation}
Since $n|G|^2\ge S^2$,
\begin{equation}\tag{2.8}\label{eq:2.8}
 n|G|^2+(t(n+1)-1)S^2\ge t(n+1)S^2.
\end{equation}

\begin{lemma}\label{lem:derivative}
For $t\ge\tau_n$, one has $\mathcal D_t\ge0$.
\end{lemma}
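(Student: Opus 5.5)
We argue pointwise, viewing $\mathcal D_t$ as a quadratic form in the completely symmetric trace-free $4$-tensor $\eta_{ijkl}=\sigma_{ijk;l}$, with $\sigma$ frozen at the point. This is legitimate because $\nabla\sigma$ is completely symmetric and trace-free by Codazzi and minimality. In these terms
\[
 W_{ijk}:=\nabla_iG_{jk}=\sigma_{jab}\eta_{kabi}+\sigma_{kab}\eta_{jabi}=P_{ijk}+P_{ikj},
 \qquad P_{ijk}:=\sigma_{jab}\eta_{kabi},
\]
and $\nabla_iS=W_{ijj}$. The first step is to rewrite the mixed term by splitting $W$ into its parts symmetric and antisymmetric in the first two indices:
\[
 \nabla_iG_{jk}\nabla_jG_{ik}=|W|^2-\tfrac12\sum_{i,j,k}(W_{ijk}-W_{jik})^2 .
\]
By the complete symmetry of $\eta$, the contribution $\sigma_{kab}(\eta_{jabi}-\eta_{iabj})$ to $W_{ijk}-W_{jik}$ vanishes. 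Hence
\[
 W_{ijk}-W_{jik}=P_{ijk}-P_{jik},
\]
which is a bilinear expression in $\sigma$ and $\eta$ and is antisymmetric in $i,j$.

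The second step handles the $|\nabla S|^2$ term. For $t\ge\tau_n$ we have $(2t-1)/4\ge-\frac{1}{2(n+2)}$, so it suffices to show $|W|^2\ge\frac1{2(n+2)}|\nabla S|^2$. This follows from the trace: $W_{ijj}=S_{;i}$ gives $|W|^2\ge\frac1n|\nabla S|^2$ by Cauchy--Schwarz on the trace part, and $\frac1n>\frac1{2(n+2)}$. There is slack here, which can be spent later if needed. If extra room is required, I would also use the divergence identity $W_{iij}=\frac12S_{;j}$ together with the improved Kato inequality $|\nabla S|^2\le4\alpha S|\nabla\sigma|^2$, valid for trace-free Codazzi symmetric tensors.

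It then remains to prove
\[
 \tfrac12\sum_{i,j,k}(P_{ijk}-P_{jik})^2\le\tau_nS|\eta|^2,
 \qquad\text{that is,}\qquad
 \sum_{i,j,k}(P_{ijk}-P_{jik})^2\le\alpha S|\eta|^2 .
\]
This is the main obstacle. The naive bound is
\[
 |P|^2=\sum_{k,i}\eta_{k\cdot\cdot i}^{T}\,G\,\eta_{k\cdot\cdot i}\le\alpha S|\eta|^2,
\]
obtained from Lemma~\ref{lem:gram}. Combined with $|X-X^T|^2\le4|X|^2$, it only gives $4\alpha S|\eta|^2$, so the antisymmetrization has to save a factor $4$. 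Writing $|X-X^T|^2=2|X|^2-2\operatorname{tr}(XX)$, the task reduces to bounding the cross term $\sum P_{ijk}P_{jik}$ from below.

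To do this I would work in an eigenframe of $G$ and use the orthogonal decomposition $(a,u,C,D)$ from the proof of Lemma~\ref{lem:gram} relative to each basis direction. I would also decompose $\eta$ according to the number of indices equal to a fixed direction, in the same spirit as \eqref{eq:2.1}. The aim is to show that the cross term $\sum P_{ijk}P_{jik}$ compensates, so that the sum over $i\ne j$ is controlled by $\alpha S|\eta|^2$ rather than by $4\alpha S|\eta|^2$. Should this sharp estimate fail, I would instead keep the positive part $|W|^2-\frac1{2(n+2)}|\nabla S|^2$ from the second step and absorb the residual into it. For equality I expect the Calabi-type tensor of Lemma~\ref{lem:gram}, which indicates where the bound must be tight.
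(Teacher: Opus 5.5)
There is a genuine gap, and it sits exactly at the step you call the main obstacle. The estimate $\sum_{i,j,k}(P_{ijk}-P_{jik})^2\le\alpha S|\eta|^2$ is not proved. It is also false for $n=3$ and $n=4$, even at the Calabi cubic you expected to be extremal.

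Take $n=3$ with $\sigma_{111}=2$ and $\sigma_{122}=\sigma_{133}=-1$, so $S=10$ and $\alpha S=6$. Let $\eta$ be the completely symmetric tensor whose nonzero components are the permutations of $\eta_{1123}=2$ and $\eta_{2223}=\eta_{2333}=-1$.
\begin{itemize}
\item Its only nontrivial trace is $\sum_i\eta_{ii23}=2-1-1=0$, so $\eta$ is trace-free.
\item Its norm is $|\eta|^2=48+4+4=56$.
\item One has $\sigma_{abc}\eta_{abci}=0$, so $\nabla S$ vanishes for this pair and no multiple of $|\nabla S|^2$ can compensate.
\end{itemize}
From your definition of $P$ one gets $P_{213}=P_{312}=6$ and $P_{123}=P_{132}=P_{231}=P_{321}=-4$, with all other components zero. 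Hence
\[
 \sum_{i,j,k}(P_{ijk}-P_{jik})^2=4\cdot10^2=400>336=\alpha S|\eta|^2 .
\]
So the quantity $tS|\eta|^2-\tfrac12\sum_{i,j,k}(P_{ijk}-P_{jik})^2$, which your reduction needs to be nonnegative, is negative at $t=\tau_3$. It is also negative at $t=8/25$, the value used when $n=3$, since $179.2<200$. Padding both tensors by zeros to $\R^4$ keeps them trace-free and gives $400>\tfrac23\cdot10\cdot56$, so $n=4$ fails as well.

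The loss is in your second step, which spends all of $|W|^2$ on the small $|\nabla S|^2$ term. Let $P^s$ be the full symmetrization of $P$. A direct expansion, using only that $P_{ijk}$ is symmetric in $i$ and $k$, gives
\[
 |W|^2=|P|^2+3|P^s|^2,\qquad
 \sum_{i,j,k}(P_{ijk}-P_{jik})^2=3\bigl(|P|^2-|P^s|^2\bigr).
\]
Hence $\nabla_iG_{jk}\nabla_jG_{ik}=-\tfrac12|P|^2+\tfrac92|P^s|^2$, and
\[
 \mathcal D_{\tau_n}=\tfrac12\bigl(\alpha S|\eta|^2-|P|^2\bigr)+\tfrac92|P^s|^2-\tfrac1{2(n+2)}|\nabla S|^2 .
\]
The three terms are then handled as follows.
\begin{itemize}
\item The bound you call naive, $|P|^2\le\alpha S|\eta|^2$, is exactly what the first term needs.
\item The trace of $P^s$ is $\tfrac13\nabla S$, so the trace part of $P^s$ alone has squared norm $\tfrac1{3(n+2)}|\nabla S|^2$. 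The last two terms are therefore at least $\tfrac1{n+2}|\nabla S|^2$.
\item Finally, $\mathcal D_t-\mathcal D_{\tau_n}=(t-\tau_n)\bigl(S|\eta|^2+\tfrac12|\nabla S|^2\bigr)\ge0$ for $t\ge\tau_n$.
\end{itemize}
This is the paper's argument; its $E_{jik}$ is your $P_{ijk}$. Your fallback of absorbing the residual into $|W|^2$ points in this direction. Without these identities, however, the proposal does not prove the lemma.
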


\begin{proof}
Write $T_{ijkl}=\sigma_{ijk;l}$, and set, within this proof,
\[
 E_{ijk}=\sigma_{iab}T_{abjk},\qquad
 \Gamma_i=\sigma_{abc}T_{abci},\qquad
 E^s_{ijk}=(E_{ijk}+E_{jik}+E_{kij})/3.
\]
The last two indices of $E$ are symmetric, and trace-freeness gives
$\sum_j E_{ijj}=0$ and $\sum_j E_{jij}=\Gamma_i$.
Differentiation of $G$ and expansion of the three summands in $E^s$ yield
\[
\begin{aligned}
 \nabla_iG_{jk}&=E_{jik}+E_{kij},\\
 \nabla_iG_{jk}\nabla_jG_{ik}&=|E|^2+3E_{ijk}E_{jik},\\
 |E^s|^2&=\tfrac13\bigl(|E|^2+2E_{ijk}E_{jik}\bigr).
\end{aligned}
\]
Consequently,
\[
 \nabla_iG_{jk}\nabla_jG_{ik}=-\tfrac12|E|^2+\tfrac92|E^s|^2,
 \qquad \operatorname{tr}E^s=\tfrac23\Gamma.
\]
For $(\delta\Gamma)_{ijk}=\delta_{ij}\Gamma_k+\delta_{ik}\Gamma_j
+\delta_{jk}\Gamma_i$,
\[
 |\delta\Gamma|^2=3(n+2)|\Gamma|^2,\qquad
 |E^s|^2=\left|E^s-\frac{2}{3(n+2)}\delta\Gamma\right|^2
             +\frac{4}{3(n+2)}|\Gamma|^2.
\]
Using $\nabla S=2\Gamma$ yields
\[
 \mathcal D_{\tau_n}
 =\tfrac12\bigl(\alpha S|T|^2-|E|^2\bigr)
  +\tfrac92\left|E^s-\frac{2}{3(n+2)}\delta\Gamma\right|^2
  +\frac{4}{n+2}|\Gamma|^2.
\]
Order the Gram eigenvalues as $g_1\ge\cdots\ge g_n\ge0$.
In a corresponding orthonormal eigenframe, the matrices $A_i/\sqrt{g_i}$
with $g_i>0$ are orthonormal; if $g_i=0$, then $A_i=0$.
For a symmetric matrix $U$, Bessel's inequality gives
\[
\begin{aligned}
 \sum_i\langle A_i,U\rangle^2
 &=\sum_{g_i>0}g_i\left\langle\frac{A_i}{\sqrt{g_i}},U\right\rangle^2\\
 &\le g_1\sum_{g_i>0}\left\langle\frac{A_i}{\sqrt{g_i}},U\right\rangle^2
 \le g_1|U|^2\le\alpha S|U|^2.
\end{aligned}
\]
Apply this to $U_{ab}=T_{abjk}$ and sum over $j,k$ to obtain
$|E|^2\le\alpha S|T|^2$. Finally,
\[
 \mathcal D_t-\mathcal D_{\tau_n}=(t-\tau_n)\left(S|T|^2+\tfrac12|\nabla S|^2\right)\ge0.
\]
\end{proof}

The following algebraic estimate supplies the remaining sign and its
equality implication.

\begin{proposition}\label{prop:sextic}
Let $\sigma$ be a trace-free symmetric cubic on $\mathbb R^n$, $n\ge3$.
For $n\ge4$, put $t=\tau_n$; for $n=3$, put $t=8/25$. Then
$\mathcal B_t\ge0$. If $S>0$ and $\mathcal B_t=0$, then
$\lambda_{\max}(G)=\alpha S$. In dimension three there is the identity
\begin{equation}\tag{2.9}\label{eq:2.9}
 \mathcal B_{8/25}=9\det\left(\tfrac35Sg-G\right).
\end{equation}
The determinant is taken after raising one index with the metric.
\end{proposition}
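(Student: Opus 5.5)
The plan is to reduce $\mathcal B_t\ge0$ to a pointwise inequality for a trace-free symmetric cubic, treated by normal forms. First I would verify the three-dimensional identity \eqref{eq:2.9}, since it also shapes the general strategy.

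\emph{Step 1 (dimension three).} For $n=3$ we have $c_3=10/3$ and $\alpha=3/5$. A trace-free symmetric cubic on $\R^3$ lives in a 7-dimensional space, namely the harmonic cubics, an irreducible $SO(3)$-module. Both sides of \eqref{eq:2.9} are $O(3)$-invariant sextics. I would compare them on a generic slice: diagonalize $G$ by rotation, or parametrize harmonic cubics modulo $SO(3)$ by four parameters. A direct polynomial expansion then confirms the identity. Once \eqref{eq:2.9} holds, Lemma~\ref{lem:gram} gives $\tfrac35Sg-G\ge0$, so $\mathcal B_{8/25}\ge0$. Equality forces $\det=0$, that is, some eigenvalue $g_i=\alpha S$, which is $\lambda_{\max}(G)=\alpha S$.

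\emph{Step 2 (general $n\ge4$).} Here I would write $\mathcal B_{\tau_n}$ in the decomposition \eqref{eq:2.1} adapted to a top Gram eigenvector $e_1$. Since $e_1$ is an eigenvector, $G_{1\beta}=0$. The aim is to express $\mathcal B_{\tau_n}$ as
\[
(\alpha S-g_1)\cdot P+R,
\]
where $P\ge0$ and $R$ is a sum of squares, for instance terms like $|[G,A_a]|^2$ deficits and $|V-\text{(projection)}|^2$.

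A more robust route is to write $S Q$ and $|V|^2$ in terms of the Gram eigenvalues $g_i$ and the matrices $A_i$. Three bounds would do this:
\begin{itemize}
\item the Böttcher--Wenzel / Li--Li type bound $\sum|[A_i,A_j]|^2\le 2\sum_{i\ne j}|A_i|^2|A_j|^2$ refined by trace-freeness;
\item $|V|^2\le\bigl(\max_a\ldots\bigr)$ controlled via $\sum_a(\sum_i g_i\sigma_{iia})^2$;
\item $\tfrac12\sum_a|[G,A_a]|^2=\sum_{i<j,a}(g_i-g_j)^2\sigma_{ija}^2$ used with a plus sign.
\end{itemize}
With these, $\mathcal B_t$ should become a function of the $g_i$ plus nonnegative remainders. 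Then $t=\tau_n$ is chosen so that the resulting symmetric function of $(g_1,\dots,g_n)$ on the region $0\le g_i\le\alpha S$, $\sum g_i=S$ is minimized exactly at the Calabi configuration $g_1=\alpha S$, $g_\beta=2a^2/(n-1)^2$. By Lemma~\ref{lem:gram}, this configuration gives $\mathcal B=0$.

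\emph{Step 3 (equality).} I would trace equality through each inequality of Step 2 to force $g_1=\alpha S$.

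The main obstacle is Step 2. Crude commutator bounds lose too much, because the sharp constant $c_n$ is attained by a configuration where the $A_\beta$ do not commute. So I would first try a Lagrange-multiplier or convexity argument on the sextic restricted to the unit sphere of trace-free cubics. The critical points decompose via the $u,C,D$ decomposition of \eqref{eq:2.1}, and I would show the minimum value $0$ is attained only where $u=C=D=0$. Where this fails, I would split into the cases $g_1\ge$ some threshold (perturbing around Calabi, estimating quadratically in $u,C,D$) and $g_1$ small (where crude Li--Li bounds suffice because $S$ is spread out).
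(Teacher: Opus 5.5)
Your Step 1 works in principle. \eqref{eq:2.9} is a true $O(3)$-invariant polynomial identity, so a brute-force expansion (in the seven coordinates of $\Sym^3_0(\R^3)$, or on a normal form) would confirm it. Your equality argument via Lemma~\ref{lem:gram} is the paper's. The paper derives the identity structurally instead. In dimension three the algebraic curvature tensor from the Gauss equation is determined by its Ricci contraction $2I-G$. This yields $Q=5p_2-S^2$ and $\tfrac12\sum_a|[G,A_a]|^2+|V|^2=3p_3-\tfrac12SQ$, so $\mathcal B_t$ is a polynomial in $S,p_2,p_3$. Matching it with the characteristic polynomial of $\tfrac35Sg-G$ is what singles out $t=8/25$.

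The genuine gap is Step 2, which is the whole content of the proposition for $n\ge4$. Step 3 inherits the gap, because the equality statement is read off from strict positivity of explicit lower bounds whenever $\lambda_{\max}(G)<\alpha S$.

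\begin{itemize}
\item The commutator bounds you list are not sharp at the Calabi cubic $\sigma_0$ of \eqref{eq:3.1}, which the sharp constant requires. The summed B\"ottcher--Wenzel inequality is strict there: for $\beta\ne\gamma$, $|[A_\beta,A_\gamma]|^2=2b^4$ while $2|A_\beta|^2|A_\gamma|^2=8b^4$, where $b=(\sigma_0)_{1\beta\beta}$. With $S=1$ one has $Q(\sigma_0)=n(n+1)/((n-1)(n+2))$, well below the Li--Li bound $Q\le\tfrac32S^2$. That bound, inserted into the Simons identity, only gives the constant $2(n+1)/3$.
\item Both $-\tfrac12\sum_a|[G,A_a]|^2$ and $-|V|^2$ enter $\mathcal B_t$ with a minus sign. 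So the commutator identity cannot simply be ``used with a plus sign''.
\item The Lagrange-multiplier fallback on the unit sphere of $\Sym^3_0(\R^n)$, with an unspecified case split, restates the problem rather than solving it.
\end{itemize}

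What is missing are the paper's two sharp projection estimates.
\begin{enumerate}
\item Bound $Q$ through the trace-free part $K=h_4\mathcal A$ of the symmetrized product $\sigma_{ija}\sigma_{kla}$. Apply Cauchy--Schwarz between $K$ and $h_4\Sym(e^\flat\otimes\sigma)$ with $e\parallel V$. This gives \eqref{eq:3.6}, including the term $-k(\ell)r$, where $S=1$ and $\ell=\lambda_{\max}(G)$.
\item Rewrite $-\tfrac12\sum_a|[G,A_a]|^2=-\tfrac32p_3+\tfrac16|N|^2$, and split $N$ orthogonally as in \eqref{eq:3.7}.
\end{enumerate}
Together these turn the net coefficient of $r=|V|^2$ into $z(\ell)>0$. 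So what is needed is a lower bound on $r$, not the upper bound you propose. For $\ell>1/2$ it comes from the square completion $r\ge(2\ell-1)(L-p_2)$.

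Even then the estimate is not ``a function of the $g_i$ plus droppable remainders''. The paper treats the ranges $\ell\le1/2$, $[1/2,2/3]$ and $[2/3,\alpha]$ by separate moment arguments. For $n\ge7$ on the last range it must keep $|W|^2$ and control it by the spectral gap of $T_G$ on $\Sym^3_0(\R^n)$ (Lemma~\ref{lem:gap}). For large $n$ this is unavoidable within that scheme: the right-hand side of \eqref{eq:4.2}, where $|W|^2$ has been discarded, is negative at $\eta=0$ just below $\ell=\alpha$.
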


The proof occupies Sections~\ref{sec:algebraic}--\ref{sec:upper}. All estimates there concern a fixed cubic
tensor and do not require it to arise from an immersion.

\section{Algebraic reduction and the lower spectral range}
\label{sec:algebraic}

Orthogonal projections of symmetric tensors give a lower bound for the
algebraic term in terms of Gram spectral moments and two nonnegative
remainders. This proves strict positivity when
$\lambda_{\max}(G)\le S/2$. The same bound, with the remainders
retained, will be used for the other spectral ranges.

\subsection{Projection estimates}

We first normalize the cubic and derive the projection estimates.

The tensor identities in this section hold for $n\ge3$; the subsequent scalar
estimates will assume $n\ge4$. Let $\sigma$ be a nonzero trace-free symmetric
cubic on $\mathbb R^n$. Throughout Sections~\ref{sec:algebraic}--\ref{sec:upper}, rescale this
fixed tensor so that $S=1$, and write
\[
 \ell=\lambda_{\max}(G),\qquad p_2=\operatorname{tr}G^2,\qquad
 p_3=\operatorname{tr}G^3,\qquad r=|V|^2,\qquad \tau=\tau_n.
\]
Every term of $\mathcal B_t$ has degree six in $\sigma$, so the general case is
recovered by homogeneity.

The normalized Calabi cubic is the equality tensor of Lemma~\ref{lem:gram}:
\begin{equation}\tag{3.1}\label{eq:3.1}
 (\sigma_0)_{111}=\sqrt{\frac{n-1}{n+2}},\qquad
 (\sigma_0)_{1\beta\gamma}
 =-\frac{\delta_{\beta\gamma}}{\sqrt{(n-1)(n+2)}},\qquad
 (\sigma_0)_{11\beta}=(\sigma_0)_{\beta\gamma\delta}=0.
\end{equation}
Its Gram spectrum is
\[
 \left(\alpha,\frac{2}{(n-1)(n+2)},\ldots,
 \frac{2}{(n-1)(n+2)}\right).
\]
For a largest Gram direction, Lemma~\ref{lem:gram} gives
\begin{equation}\tag{3.2}\label{eq:3.2}
 \varepsilon:=\alpha-\ell
 =\frac{n-2}{n+1}|u|^2
  +\frac{2(n-1)}{n+2}|C|^2+\frac{n}{n+2}|D|^2.
\end{equation}
Thus $\varepsilon=0$ characterizes the normalized Calabi cubic up to an
orthogonal change of basis. The letters $C,D$ in \eqref{eq:3.2} refer
to the decomposition in Lemma~\ref{lem:gram}.

For a symmetric $j$-tensor $U$, set
$(\operatorname{tr}U)_{i_3\cdots i_j}=\sum_aU_{aai_3\cdots i_j}$.
Write $\operatorname{Sym}^j_0(\mathbb R^n)$ for the kernel of this trace, and
let $h_j$ be the orthogonal projection onto that space. In a Gram eigenbasis,
the sum of the actions of $G$ on the three slots has components
$(g_i+g_j+g_k)\sigma_{ijk}$. Its trace-free part and its component along
$\sigma$ will reorganize the commutator term. Define
\[
 N_{ijk}=G_{ia}\sigma_{ajk}+G_{ja}\sigma_{aik}+G_{ka}\sigma_{ija},
 \qquad W=h_3N-3p_2\sigma.
\]
Put
\begin{align*}
 A&=-\frac{n(n^2-2n-4)}{(n-1)(n+2)(n+4)},&
 C&=\frac{n(n^3+3n^2-12n-12)}{2(n-1)(n+2)^2(n+4)},\\
 k(\ell)&=\frac{12(n+2)^2}{(n+4)\{n+4+3(n+2)\ell\}},&
 z(\ell)&=\tau k(\ell)-\alpha,\\
 L&=\frac{2n\ell^2-4\ell+1}{n-2}.&&
\end{align*}
The constants $A,C$ are, equivalently,
$n/c_n-\tau\,4(n+2)/(n+4)$ and
$(\tau(n+1)-1)/c_n-\tau\,4/((n+2)(n+4))$.

\begin{lemma}\label{lem:projection}
One has $z(\ell)>0$ and
\begin{equation}\tag{3.3}\label{eq:3.3}
 \mathcal B_\tau\ge\frac32p_2^2-\frac32p_3+Ap_2+C+z(\ell)r
 +\frac16|W|^2.
\end{equation}
For $\ell>1/2$, one also has
\begin{equation}\tag{3.4}\label{eq:3.4}
 \mathcal B_\tau\ge\frac32p_2^2-\frac32p_3+Ap_2+C
 +z(\ell)(2\ell-1)(L-p_2)+\frac16|W|^2.
\end{equation}
\end{lemma}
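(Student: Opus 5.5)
The plan is to work with $S=1$ in a Gram eigenframe, $G=\operatorname{diag}(g_1,\dots,g_n)$ with $g_1=\ell$. I will rewrite each non-polynomial term of $\mathcal B_\tau$ through orthogonal projections of natural symmetric tensors built from $\sigma$. After that, \eqref{eq:3.3} should reduce to discarding the nonnegative norms of the trace-free remainders, except for $\tfrac16|W|^2$, which is kept. The inequality \eqref{eq:3.4} will then follow from \eqref{eq:3.3}, $z(\ell)>0$, and a separate lower bound $r\ge(2\ell-1)(L-p_2)$ for $\ell>1/2$.

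\textbf{Step 1 (the commutator term $\sum_a|[G,A_a]|^2$ via $N$).} In the eigenframe, $N_{ijk}=(g_i+g_j+g_k)\sigma_{ijk}$. Expanding the square gives $|N|^2=3p_3+6\sum g_ig_j\sigma_{ija}^2$. Similarly, $\tfrac12\sum_a|[G,A_a]|^2=\tfrac12\sum(g_i-g_j)^2\sigma_{ija}^2=p_3-\sum g_ig_j\sigma_{ija}^2$. Eliminating the mixed sum gives $\tfrac12\sum_a|[G,A_a]|^2=\tfrac32p_3-\tfrac16|N|^2$.

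Next I split $N$ orthogonally. Since $\sum_i\sigma_{iia}=0$, its trace is $(\operatorname{tr}N)_k=2V_k$, so the pure-trace part is $\tfrac{2}{n+2}\delta V$, whose squared norm is $\tfrac{12}{n+2}r$ by $|\delta\Gamma|^2=3(n+2)|\Gamma|^2$. Moreover $\langle h_3N,\sigma\rangle=\langle N,\sigma\rangle=3p_2$, so $|h_3N|^2=|W|^2+9p_2^2$. Altogether,
\[
 -\tfrac12\sum_a|[G,A_a]|^2=-\tfrac32p_3+\tfrac32p_2^2+\tfrac16|W|^2+\tfrac{2}{n+2}r .
\]
This produces the $\tfrac32p_2^2-\tfrac32p_3+\tfrac16|W|^2$ part of \eqref{eq:3.3}.

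\textbf{Step 2 (the quartic in $Q$).} We have $Q=p_2+2p_2-2\sum\operatorname{tr}(A_iA_jA_iA_j)$, and $-\tau Q$ enters with a negative sign, so I need an upper bound for $Q$, i.e. a lower bound for $\sum\operatorname{tr}(A_iA_jA_iA_j)$. I will introduce $P_{ijkl}=\sigma_{ija}\sigma_{kla}$ and its full symmetrization $P^s\in\operatorname{Sym}^4$. The quartic contraction, $p_2$ and $S^2=1$ are all expressible through $|P^s|^2$ and its traces. The first trace of $P^s$ is a combination of $G$ and $g$, and the double trace is a constant.

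Writing $|P^s|^2=|h_4P^s|^2+(\text{trace part})$ and using $|h_4P^s|^2\ge0$ should give exactly the shifts $-\tau\cdot\tfrac{4(n+2)}{n+4}p_2$ and $-\tau\cdot\tfrac{4}{(n+2)(n+4)}$. These shifts account for the constants $A$ and $C$. To gain the additional positive $r$-coefficient $\tau k(\ell)$, I will not discard $h_4P^s$ entirely. Instead I will pair it with $\sigma\otimes V$-type tensors, using $\langle P^s,\sigma\odot V\rangle\propto r$, and apply Cauchy--Schwarz with a weight depending on $\ell$. Combined with $G\le\ell g$ (Lemma~\ref{lem:gram}), this should produce $k(\ell)=12(n+2)^2/\{(n+4)(n+4+3(n+2)\ell)\}$. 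The coefficient of $r$ is then $\tfrac{2}{n+2}-1+\tau k(\ell)-(\text{the } \tfrac2{n+2}\text{ reabsorbed})$, which should simplify to $z(\ell)=\tau k(\ell)-\alpha$.

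\textbf{Step 3 (positivity of $z$ and proof of \eqref{eq:3.4}).} Since $k$ is decreasing and $\ell\le\alpha$, it suffices to check $z(\alpha)>0$. This reduces to $3(n+2)^2>2(n+1)(n+4)$, which holds. For \eqref{eq:3.4}, I will show $r\ge(2\ell-1)(L-p_2)$ when $\ell>1/2$. To do so, write $V_1=\ell a+\sum_\beta g_\beta\sigma_{\beta\beta1}$ and use the decomposition of Lemma~\ref{lem:gram} along the top eigendirection. Then bound $\sum_\beta g_\beta^2=p_2-\ell^2$ and $\sum_\beta g_\beta=1-\ell$ from the Gram relations by Cauchy--Schwarz; this should give the quadratic $L=(2n\ell^2-4\ell+1)/(n-2)$. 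Inserting this into \eqref{eq:3.3} with $z>0$ gives \eqref{eq:3.4}.

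\textbf{Main obstacle.} The main obstacle is Step 2: finding the right weighted Cauchy--Schwarz coupling between $h_4P^s$ and $V$ so that the $\ell$-dependent coefficient $k(\ell)$ comes out exactly. If the natural pairing does not reproduce $k(\ell)$, my first fallback is to optimize a scalar parameter in the splitting $h_4P^s=\mu\,h_4(\sigma\odot V)+\text{remainder}$ and bound the contribution using $|\sigma\odot V|^2$ in terms of $r$ and $\ell r$.
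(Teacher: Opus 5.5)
Your Step 1, the reduction of $z>0$ to $z(\alpha)>0$ (using $\ell\le\alpha$ from Lemma~\ref{lem:gram}), and the reduction of \eqref{eq:3.4} to \eqref{eq:3.3} plus a lower bound for $r$ all match the paper. Step 2 names the paper's ingredients for \eqref{eq:3.6}: $K=h_4\mathcal A$ is paired with $h_4\operatorname{Sym}(e\otimes\sigma)$, where $e=V^\sharp/|V|$. The factor $k(\ell)$ comes from $|h_4\operatorname{Sym}(e\otimes\sigma)|^2=\frac14+\frac{3(n+2)}{4(n+4)}G(e,e)$ together with $G(e,e)\le\ell$. Your bookkeeping of the $r$-coefficient is muddled: it is simply $-1+\frac{2}{n+2}+\tau k(\ell)=\tau k(\ell)-\alpha$, and nothing is reabsorbed.

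The genuine gap is the bound $r\ge(2\ell-1)(L-p_2)$ for $\ell>1/2$. Your sketch works only with $V_1=\ell a+\sum_\beta g_\beta\sigma_{\beta\beta1}$, and that cannot succeed. Take $n=4$ and let $\sigma$ be the normalized trace-free part of $\operatorname{Sym}(e_1\otimes e_1\otimes e_2)$; in the decomposition of Lemma~\ref{lem:gram} this means $a=0$, $C=D=0$ and $u\parallel e_2$. Then $G=\operatorname{diag}(5/9,2/5,1/45,1/45)$, so $\ell=5/9>1/2$. Since $\sigma_{111}=\sigma_{1\beta\gamma}=0$, we get $V_1=0$. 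Yet $(2\ell-1)(L-p_2)=623/36450>0$, so the inequality is carried entirely by $V_2$, through $u_2=\sigma_{112}$.

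Your sketch also gives no mechanism producing the factor $2\ell-1$. Cauchy--Schwarz on the transverse eigenvalues does not see where $\ell>1/2$ enters. The missing idea is to use every component of $V$. Set $m=n-1$, $\mu=(1-\ell)/m$, $d=\ell-\mu$, $Z=G|_{e_1^\perp}-\mu I$, $\eta=|Z|^2$. Put $a_i=\sigma_{i11}$ for all $i$, and let $T_i=\sigma_{i\beta\gamma}+a_i\delta_{\beta\gamma}/m$ (transverse indices). Three facts then hold:
\begin{itemize}
\item The normalization $S=1$ gives $2\ell-1=2G_{11}-S=\frac{m-1}{m}|a_\bullet|^2-|T|^2$. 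This is exactly how $\ell>1/2$ enters.
\item One has $V_i=da_i+\langle T_i,Z\rangle$ for every $i$.
\item $\mathcal K:=\frac{m}{m-1}d^2-\eta$ equals $L-p_2$. It is positive, being at least $(2\ell-1)\bigl(1+\frac{2\ell-1}{n-2}\bigr)$ because $p_2\le\ell^2+(1-\ell)^2$.
\end{itemize}
With $\rho=(m-1)/(md)$, the exact square completion $r-(2\ell-1)\mathcal K=\mathcal K\sum_i|T_i+\rho a_iZ|^2+\sum_i\langle T_i+\rho a_iZ,Z\rangle^2$ then gives the bound. This is the piece your proposal still has to supply.
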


\begin{proof}
To estimate $Q$, take the completely symmetric part of the contracted product
$\sigma_{ija}\sigma_{kla}$:
\[
 \mathcal A_{ijkl}
 =\frac13(\sigma_{ija}\sigma_{kla}+\sigma_{ika}\sigma_{jla}
       +\sigma_{ila}\sigma_{jka}),\qquad K=h_4\mathcal A.
\]
The metric insertions are
\begin{align*}
 (\delta G)_{ijkl}
 &=\delta_{ij}G_{kl}+\delta_{ik}G_{jl}+\delta_{il}G_{jk}
   +\delta_{jk}G_{il}+\delta_{jl}G_{ik}+\delta_{kl}G_{ij},\\
 (\delta\delta)_{ijkl}
 &=\delta_{ij}\delta_{kl}+\delta_{ik}\delta_{jl}
   +\delta_{il}\delta_{jk}.
\end{align*}
For a symmetric two-tensor $H$ and a symmetric four-tensor $U$, complete
symmetry gives
\[
 \langle\delta H,U\rangle=6\langle H,\operatorname{tr}U\rangle,
 \qquad
 \langle\delta\delta,U\rangle=3\operatorname{tr}^2U,
\]
where $\operatorname{tr}^2U=\sum_{i,j}U_{iijj}$. Thus the metric insertions
are orthogonal to trace-free symmetric four-tensors. Taking traces gives
\[
 \operatorname{tr}\mathcal A=\frac23G,\qquad
 \operatorname{tr}(\delta G)=(n+4)G+g,\qquad
 \operatorname{tr}(\delta\delta)=(n+2)g.
\]
It follows that the orthogonal trace part is
\[
 \mathcal A-K=\frac{2}{3(n+4)}\delta G
 -\frac{2}{3(n+4)(n+2)}\delta\delta.
\]
Its squared norm can be computed by pairing once with $\mathcal A$:
\[
 |\mathcal A-K|^2
 =\langle\mathcal A,\mathcal A-K\rangle
 =\frac{8}{3(n+4)}p_2-\frac{4}{3(n+2)(n+4)}.
\]
If $H_0=\sum_{i,j}\operatorname{tr}(A_iA_jA_iA_j)$, then
$Q=3p_2-2H_0$ and $|\mathcal A|^2=(p_2+2H_0)/3$. The first identity
follows from $\sum_iA_i^2=G$; the second follows by expanding the three
squared terms and six cross terms in $|\mathcal A|^2$. Subtracting the trace
part yields
\begin{equation}\tag{3.5}\label{eq:3.5}
 3|K|^2=\frac{4(n+2)}{n+4}p_2
       +\frac{4}{(n+2)(n+4)}-Q,
 \qquad K_{iabc}\sigma_{abc}=\frac{n+2}{n+4}V_i.
\end{equation}
For the second identity, first note that
$\mathcal A_{iabc}\sigma_{abc}=V_i$. In $\delta G$, exactly three metric
insertions survive the contraction, each contributing $V_i$.

For a unit vector $e$, put $A_e=\sigma(e,\cdot,\cdot)$ and
$T_e=\operatorname{Sym}(e^\flat\otimes\sigma)$, where the four placements
are averaged. The four diagonal terms and twelve cross terms in $|T_e|^2$
give
\[
 |T_e|^2=\frac14+\frac34|A_e|^2,\qquad
 \operatorname{tr}T_e=\frac12A_e,\qquad
 h_4T_e=T_e-\frac{\delta A_e}{2(n+4)}.
\]
The same trace pairing then gives
\begin{align*}
 |h_4T_e|^2
 &=|T_e|^2-\frac{\langle\delta A_e,T_e\rangle}{2(n+4)}\\
 &=\frac14+\frac34|A_e|^2-\frac{3|A_e|^2}{2(n+4)}
 =\frac14+\frac{3(n+2)}{4(n+4)}G(e,e).
\end{align*}
When $V\ne0$, choose $e=V^\sharp/|V|$. Since
$\langle K,h_4T_e\rangle=(n+2)|V|/(n+4)$ and $G(e,e)\le\ell$,
Cauchy's inequality gives
\[
 \frac{(n+2)^2}{(n+4)^2}r
 =\langle K,h_4T_e\rangle^2
 \le |K|^2\left\{\frac14+\frac{3(n+2)}{4(n+4)}\ell\right\}.
\]
Eliminating $|K|^2$ by \eqref{eq:3.5} yields
\begin{equation}\tag{3.6}\label{eq:3.6}
 Q\le\frac{4(n+2)}{n+4}p_2
     +\frac{4}{(n+2)(n+4)}-k(\ell)r.
\end{equation}
For $V=0$ this follows from $|K|^2\ge0$.

Next, $\operatorname{tr}N=2V$ and $\langle N,\sigma\rangle=3p_2$ imply
\begin{equation}\tag{3.7}\label{eq:3.7}
 |N|^2=9p_2^2+\frac{12}{n+2}r+|W|^2.
\end{equation}
Indeed, the trace part of $N$ is $2\delta V/(n+2)$, with squared norm
$12r/(n+2)$, and $W\perp\sigma$. In a Gram eigenframe,
\[
 |N|^2=3p_3+6\sum_{i,j,a}g_ig_j\sigma_{ija}^2,
 \qquad
 \frac12\sum_a|[G,A_a]|^2
 =p_3-\sum_{i,j,a}g_ig_j\sigma_{ija}^2.
\]
Thus \eqref{eq:2.6} becomes
\[
 \mathcal B_\tau=\frac{n}{c_n}p_2+\frac{\tau(n+1)-1}{c_n}
 -\tau Q-\frac32p_3+\frac16|N|^2-r.
\]
Substituting \eqref{eq:3.6} and \eqref{eq:3.7} gives \eqref{eq:3.3}; the
coefficient of $r$ is $\tau k(\ell)+2/(n+2)-1=z(\ell)$. Also $k$ is
decreasing and
\begin{equation}\tag{3.8}\label{eq:3.8}
 z(\ell)\ge z(\alpha)
 =\frac{n(n^2+2n+4)}{2(n+1)(n+2)(n+4)}>0.
\end{equation}

To bound $r$ from below, split the cubic relative to a largest-eigenvalue
direction. The quantity $2\ell-1$ arising in this split is positive precisely
in the range now under consideration, $\ell>1/2$. Choose a Gram eigenframe
with $g_1=\ell$ and put
\begin{gather*}
 m=n-1,\qquad \mu=\frac{1-\ell}{m},\qquad d=\ell-\mu,\\
 Z=G|_{e_1^\perp}-\mu I,\qquad \eta=|Z|^2,\qquad
 \mathcal K=\frac{m}{m-1}d^2-\eta=L-p_2.
\end{gather*}
Since $p_2\le\ell^2+(1-\ell)^2$,
\[
 \mathcal K\ge(2\ell-1)
 \left(1+\frac{2\ell-1}{n-2}\right)>0.
\]
Let $a_i=\sigma_{i11}$ and
$T_{i\beta\gamma}=\sigma_{i\beta\gamma}+a_i\delta_{\beta\gamma}/m$.
Each $T_i$ is trace-free on the transverse indices. Here
$|a_\bullet|^2=\sum_i a_i^2$ and
$|T|^2=\sum_{i,\beta,\gamma}T_{i\beta\gamma}^2$. Separating the entries
according to the number of indices equal to $1$ gives
\[
 S=2\ell-|a_\bullet|^2+\sum_{i,\beta,\gamma}\sigma_{i\beta\gamma}^2,
 \qquad
 |T|^2=\sum_{i,\beta,\gamma}\sigma_{i\beta\gamma}^2
       -\frac{|a_\bullet|^2}{m}.
\]
Since $S=1$ and $G$ is diagonal in the chosen frame, it follows that
\[
 2\ell-1=\frac{m-1}{m}|a_\bullet|^2-|T|^2,
 \qquad V_i=da_i+\langle T_i,Z\rangle.
\]
With $\rho=(m-1)/(md)$, the required square completion is
\[
 r-(2\ell-1)\mathcal K
 =\mathcal K\sum_i|T_i+\rho a_iZ|^2
  +\sum_i\langle T_i+\rho a_iZ,Z\rangle^2\ge0.
\]
Expanding the right-hand side,
$\mathcal K+\eta=md^2/(m-1)$ gives the cross coefficient
$2\rho(\mathcal K+\eta)=2d$ and the $|a_\bullet|^2$ coefficient
$\rho^2\eta(\mathcal K+\eta)=(m-1)\eta/m$.
Together with \eqref{eq:3.8}, this proves \eqref{eq:3.4}.
\end{proof}

\subsection{The lower spectral range}

When $\ell\le1/2$, a third-moment bound is sufficient; the two
nonnegative remainders in \eqref{eq:3.3} may be discarded. The resulting
scalar quadratic has a strictly positive lower bound.

\begin{proposition}\label{prop:lower}
For $n\ge4$ and $\ell\le1/2$, one has $\mathcal B_\tau>0$.
\end{proposition}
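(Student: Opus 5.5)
The plan is to drop the two nonnegative remainders in \eqref{eq:3.3}. This leaves a function of the Gram spectrum alone,
\[
 \mathcal B_\tau\ge F(g):=\tfrac32p_2^2-\tfrac32p_3+Ap_2+C,
\]
since $z(\ell)>0$, $r\ge0$ and $|W|^2\ge0$. It then suffices to prove $F>0$ on the spectral polytope
\[
 \mathcal P=\Bigl\{g\in\R^n:\ 0\le g_i\le\ell\le\tfrac12,\ \sum_ig_i=1\Bigr\}.
\]
Here $g_i\ge0$ because $G$ is a Gram matrix, and $\sum g_i=\operatorname{tr}G=S=1$ after normalization. The bound $\ell\le\alpha$ from Lemma~\ref{lem:gram} is not needed in this range. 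Note that $\ell\le1/2$ forces at least two nonzero eigenvalues, and that $p_2\le\ell\sum g_i=\ell$.

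The first attempt is the crude bound $p_3\le\ell p_2$. It gives the convex quadratic
\[
 f_\ell(p_2)=\tfrac32p_2^2+\bigl(A-\tfrac32\ell\bigr)p_2+C,\qquad p_2\in[1/n,\ell].
\]
At the right endpoint one gets $f_\ell(\ell)=A\ell+C$. Because $A<0$ for $n\ge4$ (indeed $n^2-2n-4>0$), this is smallest at $\ell=1/2$. A direct computation gives
\[
 \tfrac12A+C=\frac{n(3n+2)(n-2)}{2(n-1)(n+2)^2(n+4)}>0 .
\]
So the endpoint is harmless. The difficulty is that for small $n$ (already $n=4$) the vertex of $f_{1/2}$ lies inside $[1/n,1/2]$, and the vertex value there is slightly negative. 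The crude bound therefore fails, and $p_3$ has to be controlled jointly with $p_2$.

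The refined step is to minimize $F$ directly over $\mathcal P$, which is compact, and show the minimum is positive. At a minimizer, the Lagrange/KKT conditions on the coordinates with $0<g_i<\ell$ read
\[
 6p_2g_i-\tfrac92g_i^2+2Ag_i=\lambda .
\]
This is a quadratic equation in $g_i$, so it has at most two roots. Hence a minimizer has eigenvalues taking at most four values: $0$, $\ell$, and two interior values $x<y$, with multiplicities summing to $n$. Moreover, second-order conditions should exclude two coordinates sharing the smaller interior root. This is because $F$ is concave along the directions $e_i-e_j$ there: the Hessian of $-\tfrac32p_3$ is $-9g_i$, while $p_2^2$ contributes only a rank-one positive term. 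That reduces $F$ to a function of at most two continuous parameters plus integer multiplicities. For the cases with few distinct values, I would verify positivity by explicit one- or two-variable calculus, using the closed forms of $A$ and $C$. A useful guide is that $F$ is larger than $A\ell+C$ whenever $p_3$ is well below $\ell p_2$.

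I expect the main obstacle to be this finite but fiddly case analysis for small $n$, especially $n=4,5$, where the margin is smallest. If it becomes unwieldy, the fallback is to use a sharper moment inequality instead of $p_3\le\ell p_2$. For eigenvalues in $[0,\ell]$ summing to $1$ with fixed $p_2$, one has $\sum g_i^2(\ell-g_i)(g_i-m)\ge0$ for a suitable lower threshold $m$; this bounds $p_3$ above by an affine function of $p_2$ that is tight only at configurations with $p_2$ near $\ell$. With that bound, the same quadratic-in-$p_2$ argument as above should close. For large $n$ the crude bound alone should suffice, since the vertex of $f_{1/2}$ then moves beyond $1/2$.
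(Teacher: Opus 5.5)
Your reduction matches the paper's: drop $z(\ell)r$ and $\tfrac16|W|^2$ from \eqref{eq:3.3}, then prove $\tfrac32p_2^2-\tfrac32p_3+Ap_2+C>0$ for $0\le g_i\le\tfrac12$ with $\sum_ig_i=1$. Your diagnosis of the crude bound is also right. With $p_3\le\tfrac12p_2$ and $x=\tfrac12-p_2$ you get $\tfrac32x^2-L_nx+D_n$, where $D_n=\tfrac A2+C$ and $L_n=A+\tfrac34$. At $n=4$ this is $\tfrac32x^2-\tfrac{23}{36}x+\tfrac7{108}$, whose minimum is $-25/7776$, attained inside the range $0\le x\le\tfrac14$. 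One can check that the crude bound does close for every $n\ge5$, so $n=4$ is the only hard case.

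The gap is that the step which handles this case, controlling $p_3$ jointly with $p_2$, is never carried out, and both routes you sketch are faulty. In the KKT route, the Hessian of $\tfrac32p_2^2$ is $12g_ig_j+6p_2\delta_{ij}$, not rank one. Along $e_i-e_j$ at $g_i=g_j=g$, the second derivative of your $F$ is $2(6p_2+2A-9g)$. This is negative only at the \emph{larger} root of the stationarity quadratic, so it is the larger interior value, not the smaller, that can occur at most once. Even corrected, what remains is an unexecuted case analysis. The fallback fails outright: $\sum_ig_i^2(\ell-g_i)(g_i-m)=-p_4+(\ell+m)p_3-\ell mp_2$ involves $p_4$ and bounds $p_3$ from \emph{below}. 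It is also nonnegative only if every nonzero $g_i$ is at least $m$, which nothing guarantees.

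The missing idea is to weight a square by $\tfrac12-g_i\ge0$. For every real $a$,
\[
 0\le\sum_i\bigl(\tfrac12-g_i\bigr)(g_i-a)^2=\tfrac12p_2-p_3-2ax+\tfrac{n-2}{2}a^2 ,
\]
and the choice $a=2x/(n-2)$ gives $p_3\le\tfrac12p_2-\tfrac{2x^2}{n-2}$, with equality when every $g_i\in\{\tfrac12,a\}$. This raises the $x^2$ coefficient from $\tfrac32$ to $\tfrac{3n}{2(n-2)}$, so $\mathcal B_\tau\ge D_n-L_nx+\tfrac{3n}{2(n-2)}x^2$. At $n=4$ the discriminant becomes $L_4^2-12D_4=\tfrac{529-1008}{1296}<0$. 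For general $n$ the paper completes the square using $L_n<3/n$ and $D_n>\tfrac{3(n-2)}{4n(n+2)}$ (see \eqref{eq:3.10}). This argument is uniform in $n\ge4$ and needs no KKT analysis or case split.
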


\begin{proof}
Since $p_2=\sum_i g_i^2\le\frac12\sum_i g_i=1/2$, set
$x=1/2-p_2\ge0$. For every real $a$,
\[
 0\le\sum_i(1/2-g_i)(g_i-a)^2
 =\frac12p_2-p_3-2ax+\frac{n-2}{2}a^2.
\]
Taking $a=2x/(n-2)$ yields $p_3\le p_2/2-2x^2/(n-2)$. After discarding
the nonnegative $r$ and $W$ terms in \eqref{eq:3.3}, we obtain
\begin{gather*}
 \mathcal B_\tau\ge D_n-L_nx+\frac{3n}{2(n-2)}x^2,\\
 D_n=\frac A2+C
 =\frac{n(n-2)(3n+2)}{2(n-1)(n+2)^2(n+4)},\qquad
 L_n=A+\frac34.
\end{gather*}
Division in the dimension parameter gives
\begin{equation}\tag{3.9}\label{eq:3.9}
 A=-1+\frac7{n+4}+\frac{6-n}{(n-1)(n+2)(n+4)}.
\end{equation}
Consequently,
\begin{align*}
 \frac3n-L_n
 &=\frac1{n+4}\left\{
 \frac{(n-6)^2+12}{4n}+\frac{n-6}{(n-1)(n+2)}\right\}\\
 &\ge\frac1{n+4}\left\{\frac3n-\frac2{(n-1)(n+2)}\right\}>0.
\end{align*}
Here $n\ge4$ and $(n-1)(n+2)>n$. The constant term satisfies
\begin{equation}\tag{3.10}\label{eq:3.10}
 D_n-\frac{3(n-2)}{4n(n+2)}
 =\frac{(n-2)\{(n-4)(3n^2+n-2)+16\}}
        {4n(n-1)(n+2)^2(n+4)}>0.
\end{equation}
Using $x\ge0$ before replacing the linear coefficient, we get
\[
 \mathcal B_\tau>
 \frac{3n}{2(n-2)}\left(x-\frac{n-2}{n^2}\right)^2
 +\frac{3(n-2)(n^2-2n-4)}{4n^3(n+2)}>0.
\]
\end{proof}

\section{The intermediate spectral range}
\label{sec:intermediate}

For $1/2\le\ell\le2/3$, the projection bound reduces to a quadratic
in the variance of the transverse Gram eigenvalues. We prove its
nonnegativity by estimating the three Bernstein coefficients. One
endpoint estimate also extends to $\ell=\alpha$ in dimensions five
and six, settling those cases.

Assume $n\ge4$ and $\ell\ge1/2$, and put
\[
 s=1-\ell,\qquad
 \bar p_2=\ell^2+\frac{s^2}{n-1},\qquad
 \bar p_3=\ell^3+\frac{s^3}{(n-1)^2},\qquad
 \eta=p_2-\bar p_2,\qquad
 q=\frac{n-2}{n-1}s^2.
\]
Then $0\le\eta\le q$ by the Cauchy--Schwarz inequality and $\sum_{i>1}g_i^2\le(\sum_{i>1}g_i)^2$. Since $\ell\le\alpha<1$, one has $q>0$. For $m$ nonnegative numbers $x_a$ with sum $s$,
\[
 (m+1)s\sum_a x_a^2-s^3-m\sum_a x_a^3
 =\sum_{a<b}(x_a-x_b)^2(s-x_a-x_b)\ge0.
\]
Taking $m=n-1$ and using the transverse Gram eigenvalues yields
\begin{equation}
 p_3\le\bar p_3+\frac{n}{n-1}s\eta.
 \tag{4.1}\label{eq:4.1}
\end{equation}
Dropping $|W|^2$ in \eqref{eq:3.4}, we obtain
\begin{equation}
 \mathcal B_\tau\ge b_0(\ell)+M_0(\ell)\eta+\frac32\eta^2,
 \qquad 0\le\eta\le q,
 \tag{4.2}\label{eq:4.2}
\end{equation}
where
\begin{equation}
\begin{aligned}
 b_0&=\frac32(\bar p_2^2-\bar p_3)+A\bar p_2+C
       +(2\ell-1)z(\ell)(L-\bar p_2),\\
 M_0&=3\bar p_2+A-(2\ell-1)z(\ell)-\frac{3n}{2(n-1)}s.
\end{aligned}
\tag{4.3}\label{eq:4.3}
\end{equation}
At $\ell=1/2$, the same inequality follows directly from \eqref{eq:3.3}. Denote the value of the quadratic at its right endpoint by
\[
 b_2(\ell)=b_0+M_0q+\frac32q^2.
\]
The endpoint values correspond to equal transverse eigenvalues and
maximal transverse variance. We estimate the quadratic on the full
interval $[0,q]$, whether or not each value is realized by a cubic.
Set $v=\eta/q$ and
\[
 b_1=\frac12\left(b_0+b_2-\frac32q^2\right).
\]
Since $q>0$, the identity
\begin{equation}
 b_0+M_0\eta+\frac32\eta^2
 =b_0(1-v)^2+2b_1v(1-v)+b_2v^2,
 \qquad 0\le v\le1,
 \tag{4.4}\label{eq:4.4}
\end{equation}
reduces the problem to the signs of $b_0,b_1,b_2$. The endpoint bounds
in the next two lemmas also give $b_1>0$.

\subsection{Endpoint estimates}

We estimate the endpoint coefficients $b_0$ and $b_2$ first.

\begin{lemma}\label{lem:interpolation}
For $n\ge4$ and $1/2\le\ell\le2/3$,
\begin{equation}
 b_0(\ell)\ge\frac12(\alpha-\ell)(1-\ell).
 \tag{4.5}\label{eq:4.5}
\end{equation}
For $n=4,5,6$, the same estimate holds on $[1/2,\alpha]$.
\end{lemma}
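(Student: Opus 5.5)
The plan is to write $b_0$ as an explicit rational function of $\ell$ and $n$, and then prove a polynomial inequality in $\ell$ on the stated interval. Substitute $s=1-\ell$ into $\bar p_2=\ell^2+s^2/(n-1)$ and $\bar p_3=\ell^3+s^3/(n-1)^2$. For $L$, use $L-\bar p_2=\mathcal K|_{\eta=0}=\frac{m}{m-1}d^2$ with $m=n-1$ and $d=\ell-s/m$. Since $d=(n\ell-1)/(n-1)$, this gives
\[
 L-\bar p_2=\frac{(n\ell-1)^2}{(n-1)(n-2)}.
\]
Then
\[
 b_0=\tfrac32(\bar p_2^2-\bar p_3)+A\bar p_2+C
   +(2\ell-1)\,z(\ell)\,\frac{(n\ell-1)^2}{(n-1)(n-2)},
\]
with $z(\ell)=\tau k(\ell)-\alpha$. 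The only non-polynomial piece is the factor $1/\{n+4+3(n+2)\ell\}$ inside $k$, which is positive. Multiplying through by it, together with $(n-1)^2(n+2)^2(n+4)$, gives an equivalent polynomial inequality $P_n(\ell)\ge0$. Here $P_n$ has degree five in $\ell$, with coefficients polynomial in $n$.

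The first structural check is that the right-hand side of \eqref{eq:4.5} is sharp at $\ell=\alpha$. Indeed, at $\ell=\alpha$ with $\eta=0$ the tensor is the normalized Calabi cubic, where $\mathcal B_\tau$ should vanish. So $b_0(\alpha)=0$, and $P_n$ therefore contains the factor $(\alpha-\ell)$, i.e. $(n-2\ell-n\ell)$ up to scaling. I would first verify $b_0(\alpha)=0$ directly from the formulas, using the stated expressions for $A$ and $C$ in terms of $n/c_n$ and $\tau$. I would then factor out $(\alpha-\ell)(1-\ell)$. The factor $(1-\ell)$ is natural because every transverse quantity scales with $s$; if it does not divide exactly, keep it as a weight. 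What remains is a cubic (or quartic) $R_n(\ell)$, and the task is to show $R_n(\ell)\ge c\cdot\{n+4+3(n+2)\ell\}$ for the appropriate normalizing constant $c$.

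For $n\ge4$ and $\ell\in[1/2,2/3]$, I would change variables to $\ell=1/2+y/6$ with $y\in[0,1]$. I would then expand the difference polynomial in the Bernstein basis on $[0,1]$, or simply in powers of $y$ and of $n-4$, and show that all coefficients are nonnegative polynomials in $n-4$. This is the same device the section uses for $b_0,b_1,b_2$, and it is the route most likely to produce clean sign-definite coefficients. Where a coefficient fails to be sign-definite, I would fall back on bounding the lower-order terms by the leading one using $\ell\ge1/2$, as in the proof of Proposition~\ref{prop:lower}. For the small dimensions $n=4,5,6$ on the longer interval $[1/2,\alpha]$, $\alpha$ equals $2/3$, $5/7$ and $3/4$. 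Here $P_n$ is an explicit one-variable polynomial with rational coefficients, and I would verify its nonnegativity by the same Bernstein expansion on $[1/2,\alpha]$, or by exact root isolation.

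The main obstacle is the algebra of the generic-$n$ step. One must confirm the factorization through $(\alpha-\ell)$ and then obtain coefficients that are provably nonnegative for all $n\ge4$ simultaneously. The margin near $\ell=\alpha$ is tight, since the remainder after factoring must still dominate $\frac12(1-\ell)$ times the denominator factor there. So I would first evaluate the reduced inequality at the endpoints $\ell=1/2$ and $\ell=2/3$ as functions of $n$, to confirm that the margin grows with $n$. Only then would I attempt the full coefficient-sign argument.
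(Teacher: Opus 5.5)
Your proposal is correct in outline and takes a genuinely different route from the paper for the main step; the decisive sign verification is left undone, although it would in fact succeed.

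\textbf{Shared first step.} Your reduction matches the paper's. With $d=(n\ell-1)/(n-1)$ one has $L-\bar p_2=\frac{n-1}{n-2}d^2$, and $\alpha-\ell$ is divided out. The vanishing $b_0(\alpha)=0$ must be checked directly from \eqref{eq:4.6}, as you plan. Your Calabi-cubic heuristic alone only gives $b_0(\alpha)\le\mathcal B_\tau(\sigma_0)$ through \eqref{eq:4.2}.

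\textbf{Where the routes differ.} You clear $\Lambda$ and propose to certify the resulting quartic by the signs of its Bernstein coefficients as functions of $n$. This is mechanical, but you do not carry out that computation. The paper keeps the one rational term $-cd^2/D(d)$ in \eqref{eq:4.7} and proves the exact chord remainder \eqref{eq:4.8}, $\Phi-\mathcal L\Phi=(d-u)(v-d)H_{u,v}(d)$. It bounds $H_{u,v}$ below by the single number $H_*$ in \eqref{eq:4.9}. After that only two endpoint evaluations remain, each visibly uniform in $n$.

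\textbf{Your route would work.} Put $f=\Phi-(1-\ell)/2$. By \eqref{eq:4.8}, the degree-four Bernstein coefficients of $D(d)f(d)$ on the paper's intervals are nonnegative combinations of $f(u)$, $f(v)$, $D(u)H(u)$, $D(v)H(v)$, and a middle term that is at least $\frac12(D(u)+D(v))H_*$. So no subdivision or degree elevation is needed. The paper's organization is what makes the uniformity in $n$ checkable by hand.

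\textbf{Two expectations in your plan are wrong.}
\begin{itemize}
\item The factor $1-\ell$ does not divide $b_0$. For $n=4$, $b_0(1)=\frac1{108}+\frac1{26}\neq0$. So the target inequality is $R_n\ge\kappa(1-\ell)\Lambda$, not $R_n\ge c\Lambda$.
\item The tight spots are not near $\ell=\alpha$. By \eqref{eq:4.10} the estimate is an equality at $n=4$, $\ell=1/2$, so your $y=0$ coefficient must carry the factor $n-4$ and has no slack there. At $\ell=2/3$ the margin $\varphi_n(2/3)-\frac16\sim\frac{32}{27n}$ shrinks as $n$ grows, so the check you planned, that the margin grows with $n$, would fail at that endpoint.
\end{itemize}
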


\begin{proof}
Set $\varepsilon=\alpha-\ell$, $d=(n\ell-1)/(n-1)$ and $\Lambda=n+4+3(n+2)\ell$. The identities
\[
 \bar p_2=\frac{1+(n-1)d^2}{n},\qquad
 \bar p_2^2-\bar p_3=-\ell(1-\ell)d^2,\qquad
 L-\bar p_2=\frac{n-1}{n-2}d^2
\]
reduce \eqref{eq:4.3} to
\begin{equation}
 b_0=\frac An+C+d^2\left\{
       \frac{n-1}{n}A-\frac32\ell(1-\ell)
       +\frac{n-1}{n-2}(2\ell-1)z(\ell)\right\}.
 \tag{4.6}\label{eq:4.6}
\end{equation}
In particular, $b_0(\alpha)=0$. For $\ell<\alpha$, define $\varphi_n(\ell)=b_0(\ell)/(\alpha-\ell)$. The following computation also establishes its continuous extension at $\alpha$. Write
\[
 d_0=\frac{(n-2)(n+1)}{(n-1)(n+2)},\qquad
 c=\frac{3n(n-1)(n+2)(5n+14)}{8(n-2)(n+1)^2(n+4)},\qquad
 D=\frac{\Lambda}{4(n+1)}.
\]
A direct calculation gives
\begin{equation}
\begin{aligned}
 \varphi_n={}&\frac{n(n-2)}{2(n-1)(n+2)}
  +\frac{n}{2(n+1)}d
  +\left(2+\frac1{n-2}-\frac3{2n}\right)d^2\\
 &-\frac{3(n-1)}{2n}d^3-\frac{cd^2}{D}.
\end{aligned}
\tag{4.7}\label{eq:4.7}
\end{equation}
To obtain this expression, evaluate the constant term and the coefficient of $d^2$ in \eqref{eq:4.6} at $\ell=\alpha$:
\[
 \frac An+C=\frac{(n-2)^2(n+1)}{2(n-1)(n+2)^2},
\]
\[
 \left\{\frac{n-1}{n}A-\frac32\ell(1-\ell)
 +\frac{n-1}{n-2}(2\ell-1)z(\ell)\right\}_{\ell=\alpha}
 =-\frac{n-1}{2(n+1)}.
\]
These values satisfy $b_0(\alpha)=0$ and $d_0-d=n\varepsilon/(n-1)$. The only rational subtraction is
\[
 \frac{(2\ell-1)z(\ell)-(2\alpha-1)z(\alpha)}{\varepsilon}
 =2\alpha-\frac{3n(n+2)(5n+14)}{2(n+1)(n+4)\Lambda}.
\]
Indeed, in the two linear fractions defining $z$, the denominator at $\alpha$ is $4(n+1)$ and the cross-multiplied numerator is linear in $\ell-\alpha$. Subtracting $b_0(\alpha)$ from \eqref{eq:4.6} now gives
\[
 \varphi_n=\frac{n}{2(n+1)}(d_0+d)
 +d^2\left\{-\frac32(2\alpha-1)+\frac32\varepsilon
 +\frac{2\alpha(n-1)}{n-2}-\frac cD\right\}.
\]
Substitute $\varepsilon=(n-1)(d_0-d)/n$ to obtain \eqref{eq:4.7}.

We compare $\varphi_n$ with its affine endpoint interpolant. Put
\[
 a=\frac{n+6}{4n},\qquad
 h=\frac{3(n-1)(n+2)}{4n(n+1)},\qquad
 D(d)=a+hd.
\]
Here $a,h,c$ are positive. Regard \eqref{eq:4.7} as the function $\Phi(d)=\varphi_n((1+(n-1)d)/n)$. For $u<v$, let
\[
 (\mathcal Lf)(d)=\frac{v-d}{v-u}f(u)+\frac{d-u}{v-u}f(v)
\]
be the affine interpolant at $u,v$. The three elementary differences needed for \eqref{eq:4.7} are
\begin{align*}
 d^2-\mathcal L(d^2)&=-(d-u)(v-d),\\
 d^3-\mathcal L(d^3)&=-(d-u)(v-d)(d+u+v),\\
 \frac{d^2}{D(d)}-\mathcal L\left(\frac{d^2}{D(d)}\right)
 &=-\frac{a^2(d-u)(v-d)}{D(d)D(u)D(v)}.
\end{align*}
For the last identity, divide $d^2/D(d)=d/h-a/h^2+a^2/(h^2D(d))$; the affine terms cancel, and direct subtraction of the remaining three reciprocals gives the displayed expression. Since the constant and linear terms in \eqref{eq:4.7} also cancel, we have the exact remainder formula
\begin{equation}
 \Phi(d)-\mathcal L\Phi(d)=(d-u)(v-d)H_{u,v}(d),
 \tag{4.8}\label{eq:4.8}
\end{equation}
where
\[
 H_{u,v}(d)=-2-\frac1{n-2}+\frac3{2n}
 +\frac{3(n-1)}{2n}(d+u+v)+\frac{ca^2}{D(d)D(u)D(v)}.
\]
On $u\le d\le v$, use only $d\ge u$ and $D(d)\le D(v)$:
\begin{equation}
 H_{u,v}(d)\ge-2-\frac1{n-2}+\frac3{2n}
 +\frac{3(n-1)}{2n}(2u+v)+\frac{ca^2}{D(u)D(v)^2}
 =:H_*.
 \tag{4.9}\label{eq:4.9}
\end{equation}
Choose $u=(n-2)/(2(n-1))$, corresponding to $\ell=1/2$. For $n=4,5,6$ take $v=d_0$, corresponding to $\ell=\alpha$. Here
\[
 D(u)=\frac{5n+14}{8(n+1)},\qquad D(v)=1,
\]
\[
 H_*=\frac{n^3-7n^2+12}{n(n-2)(n+2)}
 +\frac{3(n-1)(n+2)(n+6)^2}{16n(n-2)(n+1)(n+4)}.
\]
Substitution gives the positive values
\[
 \begin{array}{c|ccc}
 n&4&5&6\\\hline
 H_*&39/128&3193/7560&29/56
 \end{array}.
\]
For $n\ge7$ take $v=(2n-3)/(3(n-1))$, corresponding to $\ell=2/3$. Then $D(v)=(3n+8)/(4(n+1))$, and cancellation in \eqref{eq:4.9} gives
\[
 H_*=\frac12-\frac3n-\frac1{n-2}
 +3\frac{(n^2-1)(n+2)}{n(n-2)(n+4)}
 \left(\frac{n+6}{3n+8}\right)^2.
\]
The first fraction in the last term is greater than one, because
\[
 (n^2-1)(n+2)-n(n-2)(n+4)=7n-2>0;
\]
the squared fraction is greater than $1/9$. Consequently,
\[
 H_*>\frac12-\frac12-\frac14+\frac13=\frac1{12}>0.
\]
Thus \eqref{eq:4.8} puts $\Phi$ above its endpoint interpolant on each required interval.

It remains to check the endpoints against $(1-\ell)/2$, which is affine in $d$. At $\ell=1/2$, substitution of $d=(n-2)/(2(n-1))$ and $\Lambda=(5n+14)/2$ into \eqref{eq:4.7} gives
\begin{equation}
 \varphi_n(1/2)-\frac14
 =\frac{(n-4)(n^3+18n^2-8n-16)}{16(n-1)^2(n+2)(n+4)}\ge0.
 \tag{4.10}\label{eq:4.10}
\end{equation}
For $n=4,5,6$, use $d=d_0$ and $D(d_0)=1$ in the same formula:
\[
 \begin{array}{c|ccc}
 n&4&5&6\\\hline
 \varphi_n(\alpha)-1/(n+2)&7/216&481/10976&29/800
 \end{array}.
\]
For $n\ge7$, the right endpoint is $2/3$. Substituting $d=(2n-3)/(3(n-1))$ and $\Lambda=3n+8$ gives
\[
 \varphi_n(2/3)-\frac16
 =\frac{2\{n(n-4)(16n^2+37n-3)+6(27n+32)\}}
 {9(n-2)(n-1)(n+2)(n+4)(3n+8)}>0.
\]
The endpoint interpolation now yields $\varphi_n(\ell)\ge(1-\ell)/2$ throughout the required intervals. Multiplication by $\alpha-\ell\ge0$ proves the lemma, including the Calabi endpoint by continuous extension.
\end{proof}

\begin{lemma}\label{lem:opposite}
For $n\ge4$ and $1/2\le\ell\le2/3$,
\begin{equation}
 b_2(\ell)>\frac{(2\ell-1)^2}{6}.
 \tag{4.11}\label{eq:4.11}
\end{equation}
\end{lemma}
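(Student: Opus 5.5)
The plan is to evaluate $b_2$ in closed form as an explicit rational function of $\ell$ and then prove \eqref{eq:4.11} by a polynomial sign argument in the variable $w=2\ell-1\in[0,1/3]$.

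\textbf{Step 1: simplify the power sums.} At the right endpoint $\eta=q$ one has
\[
 p_2=\bar p_2+q=\ell^2+s^2 .
\]
The bound \eqref{eq:4.1} at $\eta=q$ reads
\[
 \bar p_3+\frac{n}{n-1}sq=\ell^3+s^3,
\]
since $\bigl(1+n(n-2)\bigr)/(n-1)^2=1$. Hence $b_2$ is exactly the right-hand side of \eqref{eq:3.4} with $W$ dropped, evaluated at the spectrum $(\ell,s,0,\dots,0)$. Using $\ell+s=1$ gives
\[
 \ell^2+s^2=1-2\ell s,\qquad \ell^3+s^3=1-3\ell s,\qquad 4\ell s=1-w^2 .
\]
From these,
\[
 \tfrac32\bigl(p_2^2-p_3\bigr)=-\tfrac32\,\ell s\,w^2 .
\]

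\textbf{Step 2: simplify the remaining terms.} A short computation from the definition of $L$ should give
\[
 L-\ell^2-s^2=\frac{w\,(2\ell+n-3)}{n-2}=\frac{w\,(w+n-2)}{n-2}.
\]
Therefore
\[
 b_2=C+A\Bigl(\tfrac12+\tfrac{w^2}{2}\Bigr)-\tfrac38(1-w^2)w^2+\frac{w^2(w+n-2)}{n-2}\,z(\ell).
\]
At $w=0$ this reduces to $D_n=A/2+C$. By \eqref{eq:3.10}, $D_n>0$, which already matches the claimed bound at $\ell=1/2$ and is consistent with $b_0(1/2)>0$ from Lemma~\ref{lem:interpolation}.

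\textbf{Step 3: reduce to a polynomial.} Insert $z(\ell)=\tau k(\ell)-\alpha$ with
\[
 \Lambda=n+4+3(n+2)\ell=\tfrac{5n+14}{2}+\tfrac{3(n+2)}{2}w>0 .
\]
Subtract $w^2/6$ and multiply by $\Lambda$ times the positive common denominator in $n$. The claim $b_2>w^2/6$ then becomes $P_n(w)>0$ on $[0,1/3]$, where $P_n$ is a polynomial of degree at most $5$ in $w$ with coefficients polynomial in $n$.

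\textbf{Step 4: prove $P_n>0$ (main obstacle).} I would first try the crude split
\[
 P_n(w)\ge P_n(0)-\Bigl(\sum_{j\ge1}|c_j|\,3^{-j}\Bigr)
\]
when a coefficient $c_j$ is negative, keeping $w^2$ terms as they are if their coefficient is positive. Here $P_n(0)$ is essentially $\Lambda(0)D_n$, of order $1/n$.

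The delicate term is the $w^2$ coefficient. It collects four contributions:
\begin{itemize}
 \item $A/2\approx-1/2$;
 \item $-3/8$ from the third-moment term;
 \item $-1/6$ from the target;
 \item the positive contribution $z(\ell)(w+n-2)/(n-2)$, where $z(1/2)\approx\tfrac{12(n+2)^2\tau}{(n+4)(5n+14)/2}-\alpha\approx 1.4-1$.
\end{itemize}
These nearly cancel, so the sign may be negative and must be dominated by $P_n(0)$ times $9$. I expect this to hold with room, because $D_n\sim 3/(2n)$ while $w^2\le1/9$. However, the balance is not uniform in $n$.

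Accordingly, I would carry out the argument as an explicit expansion in $n-4\ge0$, aiming to show that every coefficient of the bivariate polynomial in $(n-4,w)$, after the substitution $w=\tfrac13 x$ with $x\in[0,1]$ and a Bernstein-type rewriting in $x$, is nonnegative, with a positive constant term. If a few low-dimensional coefficients fail, I would treat $n=4,5,6$ separately by exact evaluation of the one-variable rational function, for instance through its Bernstein coefficients on $[0,1/3]$, in the same spirit as the tables in Lemma~\ref{lem:interpolation}. The large-$n$ range would then be handled by the coefficient positivity.
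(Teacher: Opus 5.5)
Your Steps 1–3 are correct and coincide with the paper's reduction. At $\eta=q$ the moments are $\ell^2+s^2$ and $\ell^3+s^3$, and your formula is exactly \eqref{eq:4.12}: $b_2=D_n+w^2\omega(w)$, with $\omega(w)=\frac38w^2+\frac A2-\frac38+z\bigl(\frac{1+w}2\bigr)\bigl(1+\frac{w}{n-2}\bigr)$.

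The gap is Step 4. It lists strategies but verifies none of them, and the heuristic behind it looks in the wrong place. The inequality is tightest not at small $w$ but at $w=1/3$, i.e.\ $\ell=2/3$. As $n\to\infty$ one has $A\to-1$ and $z(2/3)\to1$, hence $\omega(1/3)\to1/6$, so
\[
 b_2(2/3)-\tfrac1{54}=D_n+\tfrac{A}{18}-\tfrac1{18}+\tfrac{3n-5}{27(n-2)}\,z(2/3)
\]
tends to $0$ and is only of order $1/n$, like $D_n\sim3/(2n)$. There is therefore no room. Your remark that $D_n$ dominates because $w^2\le1/9$ would fail against any $O(1)$ negative contribution.

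Concretely, write $z(\frac{1+w}2)=b_*/(a_*+w)-\alpha$ with $a_*=\frac{5n+14}{3(n+2)}$ and $b_*=\frac{4n}{n+4}$, and clear $a_*+w$ (equivalently $\Lambda$). Up to a common positive factor, the constant term is $a_*D_n=O(1/n)$, while the $w^3$ coefficient tends to $-49/24$. So your crude split $P_n(0)-\sum|c_j|3^{-j}$ is negative for all large $n$, and treating $n=4,5,6$ separately cannot repair that. Whether all coefficients in $(n-4,x)$ become nonnegative after a Bernstein rewriting is precisely the content of the lemma, and you give no argument for it. Since $O(1)$ terms cancel to leave an $O(1/n)$ margin at $x=1$, this is far from automatic.

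The missing idea is the paper's monotonicity reduction. Subtracting the two quadratic terms and the two linear fractions gives
\[
 \omega(x)-\omega(1/3)=\bigl(\tfrac13-x\bigr)\Bigl\{-\tfrac38\bigl(x+\tfrac13\bigr)+\frac{b_*\,(1-a_*/(n-2))}{(a_*+x)(a_*+1/3)}+\frac{\alpha}{n-2}\Bigr\}.
\]
The braces are positive for $n\ge4$. This uses $n-2>a_*$, $b_*/(a_*+1/3)^2>2/5$, $a_*\le17/9$ and $\alpha\ge2/3$. Hence
\[
 b_2-\tfrac{w^2}{6}\ge D_n+w^2\{\omega(1/3)-1/6\},
\]
and the right-hand side is affine in $w^2\in[0,1/9]$. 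It then suffices to check the two endpoints: $D_n>0$, which is \eqref{eq:3.10}, and $E_n=b_2(2/3)-1/54>0$. Only $E_n$ needs the order-$1/n$ bookkeeping. For $n=4$ one has $E_4=1/30$. For $n\ge5$, discarding three explicitly positive differences yields $E_n>\frac{(n-5)(7n+49)+29}{36n(n+2)(n+4)}$. Step 4 needs an argument of this kind, one that isolates the endpoint $\ell=2/3$ and controls it exactly to order $1/n$.
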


\begin{proof}
At $\eta=q$, the two moments in \eqref{eq:4.2} are $\ell^2+(1-\ell)^2$ and $\ell^3+(1-\ell)^3$. Put $x=2\ell-1\in[0,1/3]$ and $v=1/3$. Using $\ell(1-\ell)=(1-x^2)/4$ gives
\begin{equation}
\begin{aligned}
 b_2&=D_n+x^2\omega(x),\\
 \omega(x)&=\frac38x^2+\frac A2-\frac38
 +z((1+x)/2)\left(1+\frac{x}{n-2}\right),
\end{aligned}
\tag{4.12}\label{eq:4.12}
\end{equation}
where $D_n=A/2+C>0$ is the constant in \eqref{eq:3.10}. To estimate $\omega$, set
\[
 a_* =\frac{5n+14}{3(n+2)},\qquad b_* =\frac{4n}{n+4}.
\]
Then $z((1+x)/2)=b_* /(a_*+x)-\alpha$, and
\[
 n-2-a_* =\frac{(n-4)(3n+7)+2}{3(n+2)}>0.
\]
Subtract the two quadratic terms and the two linear fractions in \eqref{eq:4.12}. This gives directly
\[
 \omega(x)-\omega(v)
 =(v-x)\left\{-\frac38(x+v)
 +\frac{b_*(1-a_* /(n-2))}{(a_*+x)(a_*+v)}
 +\frac{\alpha}{n-2}\right\}.
\]
Since $x\le v$, the braces are bounded below by
\[
 -\frac14+\frac{b_*}{(a_*+v)^2}
 \left(1-\frac{a_*}{n-2}\right)+\frac{\alpha}{n-2}.
\]
The factor in parentheses is positive, and
\[
 \frac{b_*}{(a_*+v)^2}
 =\frac{n}{n+4}\left(1-\frac2{3n+8}\right)^2
 \ge\frac12(9/10)^2>\frac25.
\]
Also $a_*\le17/9$ and $\alpha\ge2/3$, so the braces exceed
\[
 -\frac14+\frac25-\frac4{45(n-2)}
 \ge-\frac14+\frac25-\frac2{45}>0.
\]
Hence $\omega(x)\ge\omega(1/3)$.

To check the resulting endpoint, set $z_2=z(2/3)>0$. Substitution of $x=1/3$ into \eqref{eq:4.12} gives
\begin{equation}
 E_n:=b_2(2/3)-\frac1{54}
 =D_n+\frac A{18}-\frac1{18}+\frac{3n-5}{27(n-2)}z_2.
 \tag{4.13}\label{eq:4.13}
\end{equation}
For $n=4$, the values $D_4=7/108$, $A=-1/9$ and $z_2=7/30$ give $E_4=1/30>0$. For $n\ge5$, discard the following explicitly positive differences:
\begin{align*}
 A-\left(-1+\frac6{n+4}\right)
 &=\frac{n^2+4}{(n-1)(n+2)(n+4)}>0,\\
 z_2-\left(1-\frac8{n+4}\right)
 &=\frac{2(n^2+16n+32)}{(n+2)(n+4)(3n+8)}>0,\\
 \frac{3n-5}{27(n-2)}-\frac19
 &=\frac1{27(n-2)}>0.
\end{align*}
Use the last estimate first, with $z_2>0$, and then the first two in \eqref{eq:4.13}. The constant terms cancel, leaving
\[
\begin{aligned}
 E_n&>D_n-\frac5{9(n+4)}
 >\frac{3(n-2)}{4n(n+2)}-\frac5{9(n+4)}\\
 &=\frac{(n-5)(7n+49)+29}{36n(n+2)(n+4)}>0.
\end{aligned}
\]
Here the second inequality is \eqref{eq:3.10}. Finally, the expression $D_n+x^2\{\omega(1/3)-1/6\}$ is affine in $x^2\in[0,1/9]$. Therefore
\[
 b_2-x^2/6\ge D_n+x^2\{\omega(1/3)-1/6\}
 \ge\min\{D_n,E_n\}>0,
\]
as required.
\end{proof}

\subsection{Positivity in the intermediate range}

The endpoint bounds give positivity of the middle Bernstein coefficient
on $1/2\le\ell\le2/3$.

\begin{proposition}\label{prop:middle}
For $n\ge4$ and $1/2\le\ell\le2/3$, one has $\mathcal B_\tau\ge0$.
\end{proposition}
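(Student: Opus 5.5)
The proof combines the Bernstein form \eqref{eq:4.4} with the endpoint estimates of Lemmas~\ref{lem:interpolation} and~\ref{lem:opposite}. By \eqref{eq:4.2} and \eqref{eq:4.4}, with $v=\eta/q\in[0,1]$, we have
\[
 \mathcal B_\tau\ge b_0(1-v)^2+2b_1v(1-v)+b_2v^2 .
\]
Since the three Bernstein basis polynomials are nonnegative on $[0,1]$, it suffices to show that $b_0\ge0$, $b_1\ge0$ and $b_2\ge0$ for every $\ell\in[1/2,2/3]$. At $\ell=1/2$, \eqref{eq:4.2} still holds, derived from \eqref{eq:3.3}, as remarked after \eqref{eq:4.3}. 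The only point to watch is that the endpoint lemmas are stated on the closed interval including $\ell=1/2$; if needed, continuity handles that endpoint.

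The outer coefficients are immediate. Since $\ell\le2/3<\alpha$ for $n\ge4$ (because $\alpha=n/(n+2)\ge2/3$), \eqref{eq:4.5} gives
\[
 b_0\ge\tfrac12(\alpha-\ell)(1-\ell)\ge0 .
\]
Lemma~\ref{lem:opposite} gives $b_2>(2\ell-1)^2/6\ge0$.

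For the middle coefficient, the definition gives $b_1=\tfrac12(b_0+b_2-\tfrac32q^2)$, so we need $b_0+b_2\ge\tfrac32q^2$. Recall that $q=\frac{n-2}{n-1}s^2$ with $s=1-\ell$. The endpoint bounds yield
\[
 b_0+b_2\ge\tfrac12(\alpha-\ell)s+\tfrac16(2\ell-1)^2 .
\]
We compare this with $\tfrac32q^2\le\tfrac32s^4$, using $s\in[1/3,1/2]$. Write $\alpha-\ell=\alpha-1+s$, and note $\alpha\ge2/3$, so $\alpha-\ell\ge s-1/3$. Also $2\ell-1=1-2s$. It then suffices to show, for $s\in[1/3,1/2]$,
\[
 f(s):=\tfrac12 s(s-\tfrac13)+\tfrac16(1-2s)^2-\tfrac32 s^4\ge0 .
\]
Expanding,
\[
 f(s)=\tfrac16-\tfrac56 s+\tfrac76 s^2-\tfrac32 s^4 .
\]
Check the endpoints: $f(1/2)=\tfrac16-\tfrac5{12}+\tfrac7{24}-\tfrac3{32}=-\tfrac{5}{96}<0$. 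So this crude bound fails near $\ell=1/2$, and the crude replacement of $\alpha$ by $2/3$ loses too much there.

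\textbf{Main obstacle.} Near $\ell=1/2$ we must keep $\alpha$ exactly. There $\alpha-\ell=\alpha-\tfrac12=\frac{n-2}{2(n+2)}$, which is at least $1/6$ for $n\ge4$. We should also use $q=\frac{n-2}{n-1}s^2$ rather than $s^2$.

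At $\ell=1/2$, with $n=4$: $b_0\ge\tfrac12\cdot\tfrac16\cdot\tfrac12=\tfrac1{24}$, while $\tfrac32q^2=\tfrac32\cdot(\tfrac23\cdot\tfrac14)^2=\tfrac1{24}$. This is borderline, so we should also use the strict inequality in Lemma~\ref{lem:opposite}. Better still, we can use the sharper bound $b_2\ge D_n+x^2\{\omega(1/3)\}$ and the explicit positive constant $D_n$ from \eqref{eq:3.10} at $x=0$, where $b_2=D_n$ exactly.

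So the plan for $b_1$ is to prove
\[
 \tfrac12(\alpha-\ell)s+b_2\ge\tfrac32\Bigl(\tfrac{n-2}{n-1}\Bigr)^2s^4 .
\]
Use the lower bound $b_2\ge\min\{D_n,E_n\}+x^2/6$ from the proof of Lemma~\ref{lem:opposite}, with $D_n\ge\frac{3(n-2)}{4n(n+2)}$. Then reduce to a one-variable polynomial inequality in $s$ on $[1/3,1/2]$ with coefficients rational in $n$. Check it at the endpoints, and show concavity or monotonicity in between; the left side decreases in $s$ only mildly, while the right side is convex and increasing. The delicate regime is $n=4$ near $\ell=1/2$. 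I expect that verifying this dimension-dependent polynomial inequality uniformly in $n\ge4$ is the main work. The fallback is to split into $n=4,5,6$ checked numerically, and $n\ge7$ using $\alpha\ge7/9$.

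Once $b_1\ge0$ is established, \eqref{eq:4.4} gives $\mathcal B_\tau\ge0$ on the whole range.
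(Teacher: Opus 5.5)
\textbf{Gap: the inequality $b_1\ge0$ is never proved.} Your reduction is right. By \eqref{eq:4.4} everything hinges on $b_0+b_2\ge\frac32q^2$, and the two endpoint bounds \eqref{eq:4.5} and \eqref{eq:4.11} already suffice for it. But after your crude attempt fails you only outline a plan (endpoint checks plus an unverified monotonicity or concavity claim). So the one nontrivial step of the proposition is missing.

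Your fallback also fails as stated. For $n\ge7$ you propose the fixed bound $\alpha\ge7/9$. At $\ell=1/2$ this gives $b_0\ge5/72$. Meanwhile $b_2=D_n\to0$ and $\frac32q^2=\frac3{32}\bigl(\frac{n-2}{n-1}\bigr)^2\to\frac3{32}$, so the inequality breaks for large $n$. Already at $n=100$ you get $5/72+D_{100}\approx0.0833<0.0919$. The underlying problem is that $\alpha-\ell$ and $q$ both grow with $n$, so each fixed bound is sharp at a different end of the range: $\alpha\ge2/3$ at $n=4$, and $q\le s^2$ as $n\to\infty$. That is also why your first attempt failed. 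At $n=4$, replacing $\alpha$ by $2/3$ loses nothing; the loss there comes entirely from $q\le s^2$.

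\textbf{The missing idea: keep the dimension coupled through one parameter.} Put $x=2\ell-1\in[0,1/3]$ and $\rho=2\alpha-1=(n-2)/(n+2)\ge1/3$. Then
\[
 \alpha-\ell=\frac{\rho-x}{2},\qquad 1-\ell=\frac{1-x}{2},\qquad q=\frac{\rho}{1+3\rho}(1-x)^2 .
\]
The inequality $(1+3\rho)^2\ge12\rho$ gives $\frac32q^2\le\frac{\rho}{8}(1-x)^4$. Its equality holds exactly at $n=4$, which is the borderline case you found. Combining this with \eqref{eq:4.5} and \eqref{eq:4.11} yields
\[
 48\,b_1>3(\rho-x)(1-x)+4x^2-3\rho(1-x)^4
 =3\bigl(\rho-\tfrac13\bigr)x(1-x)(3-3x+x^2)+x^2(1+4x-x^2)\ge0 .
\]
This holds uniformly for $n\ge4$, with no case split and no need for the extra slack $\min\{D_n,E_n\}$; strictness comes from $b_2>x^2/6$. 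This factorization is the paper's argument, and it is what your plan would need in place of the unverified polynomial inequality.
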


\begin{proof}
By \eqref{eq:4.4}, it remains to verify the signs of $b_0,b_1,b_2$. Lemmas \ref{lem:interpolation} and \ref{lem:opposite} give $b_0\ge0$ and $b_2>0$. To bound $b_1$, put $x=2\ell-1$ and $\rho=2\alpha-1=(n-2)/(n+2)$. Then
\[
 \alpha-\ell=(\rho-x)/2,\qquad 1-\ell=(1-x)/2,\qquad
 q=\frac{\rho}{1+3\rho}(1-x)^2,\qquad \rho\ge1/3.
\]
The inequality
\[
 \frac\rho{12}-\frac{\rho^2}{(1+3\rho)^2}
 =\frac{\rho(3\rho-1)^2}{12(1+3\rho)^2}\ge0
\]
together with \eqref{eq:4.5} and \eqref{eq:4.11} gives
\begin{align*}
 2b_1&>\frac{(\rho-x)(1-x)}8+\frac{x^2}6
 -\frac{3\rho^2}{2(1+3\rho)^2}(1-x)^4\\
 &\ge\frac{3(\rho-x)(1-x)+4x^2-3\rho(1-x)^4}{24}\\
 &=\frac{3(\rho-1/3)x(1-x)(3-3x+x^2)+x^2(1+4x-x^2)}{24}\ge0.
\end{align*}
The quadratic is nonnegative on its required interval.
\end{proof}

\subsection{The upper range in dimensions five and six}

For $n=5,6$, the bound for $b_0$ extends to the Calabi endpoint.
It therefore suffices to show that the linear coefficient $M_0$ is
positive on the remaining interval.

\begin{proposition}\label{prop:small-upper}
For $n=5,6$ and $2/3\le\ell\le\alpha$, one has $\mathcal B_\tau\ge0$.
\end{proposition}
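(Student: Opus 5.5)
We work with the quadratic lower bound \eqref{eq:4.2}, which was derived for all $\ell\ge1/2$ from \eqref{eq:3.4} and \eqref{eq:4.1}. It reads
\[
 \mathcal B_\tau\ge b_0(\ell)+M_0(\ell)\eta+\tfrac32\eta^2,\qquad 0\le\eta\le q .
\]
By Lemma~\ref{lem:interpolation}, for $n=5,6$ the estimate \eqref{eq:4.5} holds on the whole interval $[1/2,\alpha]$. Hence $b_0(\ell)\ge\frac12(\alpha-\ell)(1-\ell)\ge0$ on $[2/3,\alpha]$. Since $\eta\ge0$ and $\frac32\eta^2\ge0$, it suffices to prove $M_0(\ell)\ge0$ for $2/3\le\ell\le\alpha$ with $n\in\{5,6\}$. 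Together with $b_0\ge0$, this gives $\mathcal B_\tau\ge0$.

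Recall from \eqref{eq:4.3} that
\[
M_0=3\bar p_2+A-(2\ell-1)z(\ell)-\frac{3n}{2(n-1)}(1-\ell),
\]
with $\bar p_2=\ell^2+(1-\ell)^2/(n-1)$ and $z(\ell)=\tau k(\ell)-\alpha$. For fixed $n$ this is an explicit rational function of $\ell$ with a single linear denominator $\Lambda=n+4+3(n+2)\ell>0$. We will treat $n=5$ and $n=6$ separately and insert the numerical constants:
\[
n=5:\ \alpha=5/7,\ \tau=5/14,\ A=-55/252;\qquad n=6:\ \alpha=3/4,\ \tau=3/8,\ A=-6/25 .
\]
The quantity $A$ follows from its defining formula. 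Multiplying $M_0$ by $\Lambda$ gives a cubic polynomial in $\ell$, and we will show it is positive on the short interval $[2/3,\alpha]$.

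The cleanest route is to bound the pieces monotonically. The term $3\bar p_2$ increases in $\ell$ on this range, and so does $-\frac{3n}{2(n-1)}(1-\ell)$. The term $-(2\ell-1)z(\ell)$ decreases, but it is controlled: $z$ is decreasing by \eqref{eq:3.8}, so $(2\ell-1)z(\ell)\le(2\alpha-1)z(2/3)$. This yields
\[
M_0(\ell)\ge 3\bar p_2(2/3)+A-(2\alpha-1)z(2/3)-\frac{3n}{2(n-1)}\cdot\frac13 ,
\]
a single rational number for each $n$, which we then check is positive.

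The main obstacle is that this crude bound may fail. In that case we split $[2/3,\alpha]$ into two or three subintervals and apply the same monotone bound on each, or we evaluate the cubic $\Lambda M_0$ directly and verify its sign at the endpoints together with a derivative or Bernstein-coefficient check. A quick heuristic at $\ell=\alpha$ for $n=5$ suggests $3\bar p_2\approx1.58$, $A\approx-0.22$, $(2\alpha-1)z\approx0.43\cdot0.13\approx0.06$, and a linear term $\approx-0.54$. This gives a margin of about $0.7$, so even the crude bound should work comfortably. If it does, the proposition follows.
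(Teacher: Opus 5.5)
Your proof follows the paper's: $b_0\ge\frac12(\alpha-\ell)(1-\ell)\ge0$ comes from the extension of Lemma~\ref{lem:interpolation} to $[1/2,\alpha]$, and you add $M_0>0$ in \eqref{eq:4.2}. The paper gets $M_0>0$ from $M_0'>\frac{7n-8}{2(n-1)}$ and $M_0(2/3)>\frac16$ (using only $z<1$); your termwise bound also works and needs no fallback, since it evaluates to $36089/81144\approx0.445$ for $n=5$ and $167/520\approx0.321$ for $n=6$. Fix two numerical slips, neither of which affects the conclusion: for $n=6$ one has $A=-3/10$ (not $-6/25$), and for $n=5$ one has $z(\alpha)=65/252\approx0.26$ (not $0.13$).
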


\begin{proof}
Lemma \ref{lem:interpolation} gives $b_0\ge(\alpha-\ell)(1-\ell)/2$. Since $z'<0$ and
\begin{equation}
 1-z(2/3)=\frac{2(11n^2+40n+32)}{(n+2)(n+4)(3n+8)}>0,
 \tag{4.14}\label{eq:4.14}
\end{equation}
we have $z<1$ on this interval. With $d=(n\ell-1)/(n-1)$,
\[
 M_0'=6d-2z-(2\ell-1)z'+\frac{3n}{2(n-1)}
 >\frac{7n-8}{2(n-1)}>0.
\]
At the initial point, using $z(2/3)<1$ before combining the terms gives
\begin{align*}
 M_0(2/3)&>3\bar p_2(2/3)+A-\frac13-\frac{n}{2(n-1)}\\
 &=\frac16+\frac{(6-n)(4n^2+3n-4)}{6(n-1)(n+2)(n+4)}\ge\frac16.
\end{align*}
Thus $M_0>0$, and \eqref{eq:4.2} proves the result.
\end{proof}

\section{The spectral gap and the upper range}
\label{sec:upper}

For $n\ge7$ and $2/3\le\ell\le\alpha$, we estimate $|W|^2$ using
the self-adjoint action of $G$ on trace-free symmetric cubics. A
min--max argument gives a spectral gap and reduces the projection
bound to an affine expression in the transverse variance. We then
complete Proposition~\ref{prop:sextic}, including the separate
three-dimensional identity.

\subsection{A spectral gap on trace-free symmetric cubics}

The Rayleigh quotient of the axial cubic and a bound on its orthogonal
complement determine the spectral gap.

For a symmetric two-tensor $H$, define its action on $\Sym^3_0(\R^n)$ by
\[
 T_HU=\frac13h_3\bigl(H_{ia}U_{ajk}+H_{ja}U_{iak}+H_{ka}U_{ija}\bigr).
\]
For trace-free symmetric cubics $U_1,U_2$, orthogonal projection and
complete symmetry give
\[
 \langle T_HU_1,U_2\rangle=H_{ia}(U_1)_{ajk}(U_2)_{ijk}.
\]
Thus $T_H$ is self-adjoint, and $H_1\le H_2$ implies $T_{H_1}\le T_{H_2}$.
Moreover, $T_I=I$, and the definitions of $N$ and $W$ give
\begin{equation}
 \langle T_G\sigma,\sigma\rangle=p_2,
 \qquad W=3(T_G-p_2I)\sigma.
 \tag{5.1}\label{eq:5.1}
\end{equation}
Consequently, $|W|^2/9$ is the variance of $T_G$ in the unit tensor
$\sigma$.

\begin{lemma}\label{lem:gap}
Suppose $S=1$, $\ell>1/2$, and set
\begin{equation}
 a=\alpha\ell+\frac{2(1-\ell)}{(n-1)(n+2)},
 \qquad b=1-\ell+\frac{2(n+1)}{3(n+2)}(2\ell-1).
 \tag{5.2}\label{eq:5.2}
\end{equation}
If $\ell\ge(n+5)/(2(n+2))$, then
\begin{equation}
 |W|^2\ge9(a-p_2)(p_2-b).
 \tag{5.3}\label{eq:5.3}
\end{equation}
\end{lemma}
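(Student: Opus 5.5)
The plan is to read \eqref{eq:5.3} as a variance bound for the self-adjoint operator $T_G$ on $\Sym^3_0(\R^n)$, and to show that the spectrum of $T_G$ has a gap $(b,a)$. By \eqref{eq:5.1}, $\sigma$ is a unit vector of $\Sym^3_0(\R^n)$ (since $S=1$) with $\langle T_G\sigma,\sigma\rangle=p_2$, and $|W|^2/9=|T_G\sigma|^2-p_2^2$.

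\emph{Variance step.} Suppose the spectrum of $T_G$ lies in $(-\infty,b]\cup[a,\infty)$. Then $(T_G-aI)(T_G-bI)\ge0$, since $(\lambda-a)(\lambda-b)\ge0$ for every eigenvalue $\lambda$. Pairing with $\sigma$ gives
\[
 |T_G\sigma|^2-(a+b)p_2+ab\ge0 .
\]
Hence
\[
 |W|^2/9\ge(a+b)p_2-ab-p_2^2=(a-p_2)(p_2-b),
\]
which is \eqref{eq:5.3}. So it remains to locate the spectrum.

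\emph{Top eigenvalue.} Fix a Gram eigenframe with $g_1=\ell$, and let $\sigma_0$ be the normalized Calabi cubic \eqref{eq:3.1} with axis $e_1$. By the pairing formula,
\[
 \langle T_G\sigma_0,\sigma_0\rangle=\sum_i g_i\,G^{\sigma_0}_{ii} .
\]
Using the Gram spectrum of $\sigma_0$ recorded after \eqref{eq:3.1} and $\sum_{\beta>1}g_\beta=1-\ell$, this equals
\[
 \alpha\ell+\frac{2(1-\ell)}{(n-1)(n+2)}=a .
\]
So the largest eigenvalue of $T_G$ is at least $a$.

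\emph{Complement.} Let $U\perp\sigma_0$ be a unit trace-free symmetric cubic. Decompose $U$ relative to $e_1$ as in the proof of Lemma~\ref{lem:gram}. Orthogonality to $\sigma_0$ means the axial coefficient $a_U$ vanishes; here $\sigma_0$ is exactly the $a$-component direction of that orthogonal decomposition. Then \eqref{eq:2.1} gives
\[
 1=\frac{3(n+2)}{n+1}|u|^2+3|C|^2+|D|^2,\qquad
 G^U_{11}=2|u|^2+|C|^2\le\frac{2(n+1)}{3(n+2)} .
\]
The bound is attained by putting all mass into $u$, since $2(n+1)/(3(n+2))\ge 1/3$. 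Next, $g_\beta\le1-\ell$ for $\beta>1$, so $G\le\ell\,e_1e_1^{\flat}+(1-\ell)(I-e_1e_1^{\flat})$. Monotonicity of $H\mapsto T_H$ then gives
\[
 \langle T_GU,U\rangle\le\ell\,G^U_{11}+(1-\ell)\bigl(1-G^U_{11}\bigr)
 =(1-\ell)+(2\ell-1)G^U_{11}\le b ,
\]
where the last step uses $2\ell-1>0$. By min--max, every eigenvalue of $T_G$ other than the top one is at most $b$, because the second eigenvalue is bounded by the maximum of the Rayleigh quotient on $\sigma_0^\perp$.

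\emph{Ordering $a\ge b$.} The remaining step, and the only place the hypothesis $\ell\ge(n+5)/(2(n+2))$ enters, is to check $a\ge b$, so that the two spectral pieces are separated as required. Both $a$ and $b$ are affine in $\ell$, so $a-b$ is affine. I expect a direct computation to show that $a-b$ is a positive multiple of $2(n+2)\ell-(n+5)$, or at least that it is nonnegative from that threshold on. I would check this first, since it is where an error in the constants would surface. The rest is routine, given the decomposition identities \eqref{eq:2.1}.
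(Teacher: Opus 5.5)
Your proposal is correct and follows the paper's proof step for step: the Rayleigh quotient of the Calabi cubic gives $\lambda_{\max}(T_G)\ge a$, and on its orthogonal complement the bound $2|u|^2+|C|^2\le\frac{2(n+1)}{3(n+2)}$ together with $G\le(1-\ell)I+(2\ell-1)E$ and min--max puts all other eigenvalues at most $b$; pairing $(T_G-aI)(T_G-bI)\ge0$ with $\sigma$ then finishes the proof. The ordering step you left unchecked comes out exactly as you predicted, $a-b=\frac{(n-2)\{2(n+2)\ell-n-5\}}{3(n-1)(n+2)}$, which is nonnegative precisely under the hypothesis $\ell\ge(n+5)/(2(n+2))$.
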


\begin{proof}
Choose a Gram eigenframe with $g_1=\ell$, and put $E=e_1\otimes e_1$.
Let $Z_1$ be the unit trace-free symmetric cubic with independent components
\[
 (Z_1)_{111}=\sqrt{\frac{n-1}{n+2}},\qquad
 (Z_1)_{1\beta\gamma}=-\frac{\delta_{\beta\gamma}}{\sqrt{(n-1)(n+2)}},
 \qquad (Z_1)_{11\beta}=(Z_1)_{\beta\gamma\delta}=0.
\]
Its Rayleigh quotient is
\[
 \langle T_GZ_1,Z_1\rangle
 =\alpha\ell+\frac{2}{(n-1)(n+2)}\sum_{i>1}g_i=a.
\]
Hence the largest eigenvalue of $T_G$ is at least $a$.

For $U\perp Z_1$, the axial scalar component in the decomposition
\eqref{eq:2.1} vanishes. Therefore
\[
 \langle T_EU,U\rangle=2|u|^2+|C|^2
 \le\frac{2(n+1)}{3(n+2)}|U|^2.
\]
Since the transverse Gram eigenvalues have sum $1-\ell$,
\[
 G\le(1-\ell)I+(2\ell-1)E.
\]
The order property gives $\langle T_GU,U\rangle\le b|U|^2$ on the
codimension-one space $Z_1^\perp$. By the min--max principle, the second
and all subsequent eigenvalues of $T_G$ are at most $b$. Finally,
\[
 a-b=\frac{(n-2)\{2(n+2)\ell-n-5\}}{3(n-1)(n+2)}\ge0.
\]
Thus the spectrum avoids $(b,a)$, including the case $a=b$, and
$(T_G-aI)(T_G-bI)\ge0$. Pairing with $\sigma$ and using
\eqref{eq:5.1} yields
\[
 |T_G\sigma|^2-(a+b)p_2+ab\ge0,
 \qquad \frac{|W|^2}{9}=|T_G\sigma|^2-p_2^2,
\]
which proves \eqref{eq:5.3}. At the Calabi cubic, $T_G\sigma=p_2\sigma$ and $a=p_2$, so equality
holds in \eqref{eq:5.3}.
\end{proof}

\subsection{The upper spectral range}

We estimate the constant and linear coefficients of the resulting
affine bound separately.

\begin{proposition}\label{prop:upper}
If $n\ge7$, $S=1$, and $2/3\le\ell\le\alpha$, then
$\mathcal B_{\tau_n}\ge0$.
\end{proposition}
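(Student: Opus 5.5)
Starting from \eqref{eq:3.4} and keeping the $|W|^2$ remainder, I substitute the spectral gap bound \eqref{eq:5.3}. The hypothesis of Lemma~\ref{lem:gap} holds because $(n+5)/(2(n+2))\le2/3$ exactly when $n\ge7$. Write $p_2=\bar p_2+\eta$ with $0\le\eta\le q$, and bound $p_3$ by \eqref{eq:4.1}. This gives
\[
 \mathcal B_\tau\ge b_0(\ell)+M_0(\ell)\eta+\tfrac32\eta^2
 +\tfrac32(a-\bar p_2-\eta)(\bar p_2+\eta-b).
\]
The quadratic terms in $\eta$ cancel exactly, since $\tfrac32\eta^2-\tfrac32\eta^2=0$. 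The right-hand side is therefore affine in $\eta$, say $P(\ell)+R(\ell)\eta$, with
\[
 P=b_0+\tfrac32(a-\bar p_2)(\bar p_2-b),\qquad
 R=M_0+\tfrac32(a+b-2\bar p_2).
\]
An affine function of $\eta$ is nonnegative on $[0,q]$ as soon as it is nonnegative at both endpoints. I plan to prove $P\ge0$ and $R\ge0$ on $[2/3,\alpha]$, which suffices.

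For the constant term $P$, the first check is $a-\bar p_2\ge0$. With $s=1-\ell$ this is
\[
 \alpha\ell+\frac{2s}{(n-1)(n+2)}-\ell^2-\frac{s^2}{n-1},
\]
which vanishes at $\ell=\alpha$ and should factor as $(\alpha-\ell)\times$(positive linear term). Next, $\bar p_2-b\ge0$ needs checking for $\ell\ge2/3$. Here $\bar p_2\approx\ell^2$ while $b\approx s+\tfrac23(2\ell-1)$; at $\ell=2/3$ I expect this to be tight or slightly negative, so care is needed. If the product can be negative, I pair it with $b_0$. Lemma~\ref{lem:interpolation} gives $b_0\ge\tfrac12(\alpha-\ell)(1-\ell)$ only on $[1/2,2/3]$ for $n\ge7$, so I use the exact formula \eqref{eq:4.6} instead, which vanishes at $\alpha$. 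I then divide $P$ by $\alpha-\ell$ and show that the quotient, a rational function of $d$ with the single denominator $\Lambda$, is positive on the interval. For that I reuse the interpolation technique: the remainder formula \eqref{eq:4.8} with $u$ at $\ell=2/3$ and $v=d_0$, plus endpoint evaluations.

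For the linear coefficient $R$, I mimic Proposition~\ref{prop:small-upper}. I differentiate in $\ell$ and use $z'<0$ and $z<1$ from \eqref{eq:4.14}. The $a+b-2\bar p_2$ contribution has derivative $\alpha-\tfrac{2}{(n-1)(n+2)}-1+\tfrac{4(n+1)}{3(n+2)}-4\ell+O(1/n)$, which is negative. So monotonicity may fail, and I would instead bound $R$ from below at $\ell=\alpha$ and $\ell=2/3$. Between these points I would control it through the convexity or concavity of its explicit form: $z$ is convex in $\ell$, so $-(2\ell-1)z$ needs handling, but the other terms are polynomial. At $\ell=\alpha$ there is a consistency check: at the Calabi cubic $a=p_2$ and equality holds in \eqref{eq:5.3}, so $R(\alpha)$ should be computable in closed form and, I expect, positive of order one.

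The main obstacle is the positivity of $P$ near $\ell=2/3$ uniformly in $n\ge7$, where the gap factor $\bar p_2-b$ is smallest. I would first evaluate $P(2/3)$ as an explicit rational function of $n$ and show that its numerator is a polynomial with positive coefficients after the shift $n=7+k$. I would then control the interior by the affine-interpolant remainder \eqref{eq:4.8}, keeping the extra quadratic term $(a-\bar p_2)(\bar p_2-b)$, which is itself a polynomial in $d$ with easily computed interpolation remainder.
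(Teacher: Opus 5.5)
Your reduction is the paper's. Substituting \eqref{eq:5.3} into \eqref{eq:3.4} and using \eqref{eq:4.1} gives \eqref{eq:5.4}, your $P$ and $R$ are the paper's $F_n$ and $M_n$, and dividing $P$ by $\varepsilon=\alpha-\ell$ via $a-\bar p_2=d\varepsilon$ is also what the paper does. The gap is in the step you call the main obstacle: the interpolation remainder \eqref{eq:4.8} cannot prove $F_n/\varepsilon>0$, because it has the wrong sign. In Lemma~\ref{lem:interpolation}, $H_{u,v}>0$ holds only because of the term $\tfrac{3(n-1)}{2n}(d+u+v)$, which comes from the cubic $-\tfrac{3(n-1)}{2n}d^3$ in \eqref{eq:4.7}. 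Since $\bar p_2=(1+(n-1)d^2)/n$ and $b$ is affine in $d$, your extra term $\tfrac32 d(\bar p_2-b)$ contains $+\tfrac{3(n-1)}{2n}d^3$. This cancels that cubic exactly and leaves the $d^2$ coefficient $\kappa=\frac{3n^3+4n^2-20n+16}{2n(n-2)(n+2)}>\tfrac32$ of \eqref{eq:5.5}. The remainder factor for $F_n/\varepsilon$ is therefore $-\kappa+ca^2/\bigl(D(d)D(u)D(v)\bigr)$, with $a,c,D$ as in the proof of Lemma~\ref{lem:interpolation}. On $2/3\le\ell\le\alpha$ the fraction is at most $ca^2/D(u)^3$, which is about $0.57$ for $n=7$ and tends to $5/18$. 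So $F_n/\varepsilon$ lies strictly \emph{below} its chord, and endpoint evaluations, however sharp, cannot give interior positivity. The dip is real. For fixed $\ell$, as $n\to\infty$, $F_n/\varepsilon\to H_0(\ell)/\bigl(2(1+3\ell)\bigr)$ with $H_0(d)=1-d+d(3d-2)^2$. Its endpoint values are $1/18$ and $1/8$, but its minimum, near $\ell\approx0.755$, is about $0.046$.

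What is missing is a direct lower bound that does not pass through a chord. The paper clears the positive linear denominator $n\Lambda=(n+1)(n+6)+3(n-1)(n+2)d$, which gives the cubic polynomial \eqref{eq:5.6}, and sorts it by powers of $n$. It uses $Q_0=4-7d+10d^2+21d^3>0$, $C_0=-5+d-2d^2-15d^3$, and bounds the five fractional terms below by $-1$. For $d\ge\tfrac12$ and $n\ge7$ this yields $n\Lambda F_n/\varepsilon>\tfrac{n^2}{2}H_0(d)+3Q_0(d)+C_0(d)-1=\tfrac{n^2}{2}H_0(d)+9+44x+100x^2+48x^3\ge9$, where $x=d-\tfrac12$. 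The leading cubic $H_0$ is manifestly nonnegative on $[\tfrac12,1]$, which is what makes this close. For $R$, your concern about monotonicity comes from differentiating only $\tfrac32(a+b-2\bar p_2)$. The $3\bar p_2$ inside $M_0$ cancels, so $R=A-(2\ell-1)z+\tfrac32(a+b)-\tfrac{3n}{2(n-1)}(1-\ell)$. Then $z<1$ and $z'<0$ give $R'>\frac{3n^2-4n+4}{2(n-1)(n+2)}>0$, so $R(2/3)>0$ suffices. Your fallback of checking both endpoints would not have sufficed here either, since $-(2\ell-1)z$ is convex and hence so is $R$.
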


\begin{proof}
The spectral-gap hypothesis holds because
\[
 \frac23-\frac{n+5}{2(n+2)}=\frac{n-7}{6(n+2)}\ge0.
\]
Use the transverse moments and $\eta$ from Section~\ref{sec:intermediate}.
Substitution of \eqref{eq:5.3} into \eqref{eq:3.4} cancels the quadratic
moment:
\[
 \frac32p_2^2+\frac16|W|^2
 \ge\frac32(a+b)p_2-\frac32ab.
\]
Together with \eqref{eq:4.1}, this gives the affine bound
\begin{equation}
 \mathcal B_{\tau_n}\ge F_n(\ell)+M_n(\ell)\eta,
 \tag{5.4}\label{eq:5.4}
\end{equation}
where
\[
 F_n=b_0+\frac32(a-\bar p_2)(\bar p_2-b),
 \qquad
 M_n=A-(2\ell-1)z+\frac32(a+b)-\frac{3n}{2(n-1)}(1-\ell).
\]
We estimate the two coefficients separately.

By \eqref{eq:4.14}, $z<1$ and $z'<0$ for $\ell\ge2/3$.
Differentiating \eqref{eq:5.2} gives
\[
 M_n'=-2z-(2\ell-1)z'
       +\frac32\left(a'+b'+\frac{n}{n-1}\right)
 >\frac{3n^2-4n+4}{2(n-1)(n+2)}>0.
\]
At the left endpoint, use $z(2/3)<1$ before combining the fractions:
\[
 \begin{split}
 M_n(2/3)
 &>A-\frac13+\frac32\{a(2/3)+b(2/3)\}-\frac{n}{2(n-1)}\\
 &=\frac{25n^2-18n+8}{6(n-1)(n+2)(n+4)}>0.
 \end{split}
\]
Thus $M_n>0$ throughout the interval.

Set $\varepsilon=\alpha-\ell$ and $d=(n\ell-1)/(n-1)$.
The identity $a-\bar p_2=d\varepsilon$ gives
\[
 \frac{F_n}{\varepsilon}=\varphi_n+\frac32d(\bar p_2-b)
 \qquad(\varepsilon>0).
\]
Use \eqref{eq:4.7} and $\bar p_2=(1+(n-1)d^2)/n$.
The cubic terms cancel, and retaining the linear denominator gives
\begin{equation}
 \begin{split}
 \frac{F_n}{\varepsilon}
 &=\frac{n(n-2)}{2(n-1)(n+2)}
   -\frac{n^2-6n-8}{2n(n+1)(n+2)}d\\
 &\quad+\left\{
 \frac{3n^3+4n^2-20n+16}{2n(n-2)(n+2)}
 -\frac{3n(n-1)(n+2)(5n+14)}{2(n-2)(n+1)(n+4)\Lambda}
 \right\}d^2.
 \end{split}
 \tag{5.5}\label{eq:5.5}
\end{equation}
All terms extend continuously to $\varepsilon=0$.

Multiply \eqref{eq:5.5} by the positive denominator
\[
 n\Lambda=(n+1)(n+6)+3(n-1)(n+2)d,
\]
and put $P(d)=n\Lambda F_n/\varepsilon$.
The rational subtraction contributes only
\[
 -\frac{3n^2(n-1)(n+2)(5n+14)}{2(n-2)(n+1)(n+4)}d^2.
\]
Multiplying the other coefficients by the two terms of $n\Lambda$ and
collecting equal powers of $d$ gives
\begin{equation}
 \begin{split}
 P(d)
 &=\frac{n^2}{2}\{1-d+d(3d-2)^2\}
   +\frac n2(4-7d+10d^2+21d^3)\\
 &\quad-5+d-2d^2-15d^3
   +\frac{4(4+6d-9d^2)}{3(n+2)}-\frac7{3(n-1)}\\
 &\quad+\frac{4d^2(3d-2)}{n-2}
   +\frac{12d(1-d)^2}{n}+\frac{80d^2}{n+4}.
 \end{split}
 \tag{5.6}\label{eq:5.6}
\end{equation}
On the actual spectral range,
\[
 \frac12\le\frac{2n-3}{3(n-1)}\le d
 \le\frac{(n-2)(n+1)}{(n-1)(n+2)}<1.
\]
It suffices to estimate \eqref{eq:5.6} on $1/2\le d\le1$.
Write
\[
 H_0(d)=1-d+d(3d-2)^2,\quad
 Q_0(d)=4-7d+10d^2+21d^3,\quad
 C_0(d)=-5+d-2d^2-15d^3,
\]
and denote the five fractions in \eqref{eq:5.6} by $R_n(d)$.
Here $H_0\ge0$ and, since $d^2\ge d/2$,
\[
 Q_0\ge4-2d+21d^3>0.
\]
Also $4+6d-9d^2\ge4-3d\ge1$. Only two terms of $R_n$ can be
negative, and
\[
 -\frac7{3(n-1)}> -\frac12,
 \qquad
 \frac{4d^2(3d-2)}{n-2}\ge-\frac2{n-2}\ge-\frac12.
\]
Hence $R_n>-1$. Put $x=d-1/2\ge0$. As $n/2\ge3$,
\[
 \begin{split}
 P(d)&>\frac{n^2}{2}H_0(d)+3Q_0(d)+C_0(d)-1\\
 &=\frac{n^2}{2}H_0(d)+9+44x+100x^2+48x^3\ge9.
 \end{split}
\]
Consequently,
\[
 F_n(\ell)>\frac{9(\alpha-\ell)}{n\Lambda}>0
 \qquad(\ell<\alpha).
\]
The quotient in \eqref{eq:5.5} has a finite limit at $\ell=\alpha$,
so $F_n(\alpha)=0$. Since $\Lambda\le4(n+1)\le6n$,
$9/(n\Lambda)\ge1/n^2$.
The lower bound for $M_n(2/3)$ holds throughout the interval and is
larger than $1/n$: indeed, for $n\ge7$,
\[
 25n^2-18n+8>18n^2,
 \qquad
 6(n-1)(n+2)(n+4)<6n\cdot\frac{3n}{2}\cdot2n=18n^3.
\]
Discarding the excess in both coefficients therefore gives
\[
 \mathcal B_{\tau_n}\ge\frac{\alpha-\ell}{n^2}+\frac\eta n\ge0.
\]
\end{proof}

\subsection{Proof of the algebraic proposition}

We combine the spectral estimates for $n\ge4$ and treat dimension
three by the Ricci decomposition of its algebraic curvature tensor.

\begin{proof}[Proof of Proposition~\ref{prop:sextic}]
For $S=0$ the assertion is immediate. If $n\ge4$, normalize $S=1$.
Proposition~\ref{prop:lower} covers $\ell\le1/2$ and
Proposition~\ref{prop:middle} covers $1/2\le\ell\le2/3$.
For $n=4$, $\alpha=2/3$, so there is no further interval.
For $n=5,6$, Proposition~\ref{prop:small-upper} covers $[2/3,\alpha]$;
for $n\ge7$, this interval is covered by Proposition~\ref{prop:upper}.
Homogeneity restores the factor $S^3$.

In dimension three, retain $S$ without normalization and put
$p_2=\tr(G^2)$, $p_3=\tr(G^3)$.
Form the algebraic curvature tensor given by the right-hand side of
the Gauss equation. Its Ricci contraction is $2I-G$. In a Gram
eigenframe, the three-dimensional Ricci decomposition gives
\[
 K_{ij}=1+g_k-S/2,\qquad\{i,j,k\}=\{1,2,3\}.
\]
The quadratic part of the Gauss tensor has components
\[
 \sum_a(\sigma_{jka}\sigma_{ila}-\sigma_{ika}\sigma_{jla})
 =-[A_i,A_j]_{kl}.
\]
In dimension three, this tensor is diagonal on two-forms in a Ricci
eigenbasis. Each sectional component occurs four times in its
ordered-index squared norm, so
\[
 Q=p_2+4\sum_k(g_k-S/2)^2=5p_2-S^2.
\]
Removing the constant-curvature term in \eqref{eq:2.4} also gives
\[
 \frac12\sum_a|[G,A_a]|^2+|V|^2
 =-\sum_{i<j}(g_k-S/2)(g_i-g_j)^2
 =3p_3-\frac12SQ.
\]
The last equality follows by expanding
\[
 \sum_{i<j}(g_i-g_j)^2=3p_2-S^2,
 \qquad
 \sum_{i<j}g_k(g_i-g_j)^2=4Sp_2-3p_3-S^3.
\]
Consequently,
\[
 \mathcal B_t=\left(\frac{17}{5}-5t\right)Sp_2-3p_3
       +\left(\frac{11}{5}t-\frac45\right)S^3.
\]
The characteristic-polynomial identity gives
\[
 9\det\left(\frac35Sg-G\right)
 =\frac95Sp_2-3p_3-12\left(\frac S5\right)^3.
\]
Matching the $Sp_2$ coefficient selects $t=8/25$, and the $S^3$
coefficient then agrees. This proves \eqref{eq:2.9}.
Lemma~\ref{lem:gram} makes $(3S/5)g-G$ positive semidefinite,
so its determinant is nonnegative.

The estimates also prove the equality statement. For $n\ge4$ and
$\ell<\alpha$, the lower-range bound is strictly positive.
On the intermediate range, $b_0\ge(\alpha-\ell)(1-\ell)/2>0$,
$b_1>0$ and $b_2>0$ make the Bernstein quadratic strictly positive.
For $n=5,6$, the same lower bound for $b_0$, together with $M_0>0$,
gives strict positivity on the upper range; for $n\ge7$ it follows
from the positive Gram-deficit term in Proposition~\ref{prop:upper}.
Thus $S>0$ and $\mathcal B_t=0$ imply $\lambda_{\max}(G)=\alpha S$.
For $n=3$, zero determinant of the positive semidefinite tensor
$(3S/5)g-G$ gives the same conclusion.
\end{proof}

\section{The Calabi axis and the proof of the main theorem}
\label{sec:rigidity}

The integral identity forces $S\equiv0$ or $S\equiv c_n$. In the
nonzero case, the algebraic equality determines the Calabi cubic.
Codazzi makes its axis parallel, and integration of the ambient
equations gives the model immersion and the global covering statement.

\begin{lemma}\label{lem:axis}
Let $e$ be a smooth unit vector field on a Riemannian $n$-manifold,
and put $\xi=e^\flat$. Suppose the cubic tensor
\begin{equation}\tag{6.1}\label{eq:6.1}
 \sigma_{ijk}=\frac1{\sqrt n}
 \bigl((n+2)\xi_i\xi_j\xi_k
       -\xi_i g_{jk}-\xi_j g_{ik}-\xi_k g_{ij}\bigr)
\end{equation}
satisfies Codazzi, so that $\nabla\sigma$ is completely symmetric.
Then $\nabla e=0$ and $\nabla\sigma=0$.
\end{lemma}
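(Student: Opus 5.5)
The plan is to differentiate \eqref{eq:6.1} directly and extract $\nabla\xi$ from the Codazzi symmetry by contracting against $\xi$ in suitable slots. Put $\omega_{li}=\nabla_l\xi_i$. Since $|\xi|=1$, we have $\omega_{li}\xi^i=0$. Because $g$ is parallel, the covariant derivative of \eqref{eq:6.1} is
\[
 \sqrt n\,\nabla_l\sigma_{ijk}
 =(n+2)\bigl(\omega_{li}\xi_j\xi_k+\xi_i\omega_{lj}\xi_k+\xi_i\xi_j\omega_{lk}\bigr)
 -\omega_{li}g_{jk}-\omega_{lj}g_{ik}-\omega_{lk}g_{ij}.
\]
The hypothesis is $\nabla_l\sigma_{ijk}=\nabla_i\sigma_{ljk}$. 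I will feed three contractions of this identity into it, always using $\omega_{li}\xi^i=0$ to kill terms.

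First, contract $i,j,k$ with $\xi$. On the left every term contains a factor $\omega_{l\cdot}\xi^\cdot$, so the left side vanishes. On the right, set $\theta_l=\xi^i\omega_{il}=(\nabla_e\xi)_l$; the only survivors give $(n+2)\theta_l-\theta_l$. Hence $(n+1)\theta=0$, i.e.\ $\nabla_ee=0$.

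Second, contract only $j,k$ with $\xi$. The left side becomes $(n+1)\omega_{li}/\sqrt n$. The right side becomes $(n+1)\omega_{il}/\sqrt n$, after using $\theta=0$ to remove the terms $\xi_l\theta_i$. So $\omega$ is symmetric.

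Third, contract only $k$ with $\xi$. The left side is
\[
 \frac{n+1}{\sqrt n}\bigl(\omega_{li}\xi_j+\xi_i\omega_{lj}\bigr),
\]
and the right side is the same expression with $l\leftrightarrow i$. Since $\omega$ is symmetric, the first terms cancel, and we are left with $\xi_i\omega_{lj}=\xi_l\omega_{ij}$. Contracting with $\xi^i$ gives $\omega_{lj}=\xi_l\theta_j=0$. Thus $\nabla\xi=0$, so $\nabla e=0$. Finally, $\sigma$ is built polynomially from $\xi$ and $g$, so $\nabla\sigma=0$.

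The main obstacle is choosing contractions that isolate $\omega$ without the metric-insertion terms cancelling everything. The factor $n+1=(n+2)-1$ in the first two steps is nonzero, which is exactly what makes the argument work. The order also matters: the first step ($\theta=0$) is needed to clean up the second, and symmetry of $\omega$ from the second step is needed in the third. I would verify each contraction carefully, term by term, since sign or slot errors are the main risk; there is no deeper difficulty.
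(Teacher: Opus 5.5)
Your proof is correct and is essentially the paper's argument written invariantly: contracting a slot with $\xi$ plays the role of the paper's adapted frame index $1$ (with $e_1=e$), and your contractions reproduce the paper's component identities $T_{111\beta}=T_{11\beta1}$ and $T_{11\beta\gamma}=T_{1\beta\gamma1}$, where $T_{ijkl}=\sigma_{ijk;l}$. One minor slip: no $\theta$-terms actually arise in your second contraction, because the extra terms are $\xi_l\,\omega_{ij}\xi^j$ and these vanish by unit length alone; so the symmetry of $\omega$ holds outright and itself gives $\theta_l=\xi^i\omega_{li}=0$, which makes your first step redundant.
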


\begin{proof}
At a point choose an orthonormal frame with $e_1=e$, and set
$h_{\beta l}=\langle\nabla_{e_l}e,e_\beta\rangle$ for $\beta>1$.
Since $e$ has unit length, differentiating \eqref{eq:6.1} gives
\begin{equation}\tag{6.2}\label{eq:6.2}
 T_{111l}=0,\qquad
 T_{11\beta l}=\frac{n+1}{\sqrt n}h_{\beta l},\qquad
 T_{1\beta\gamma l}=0,
 \quad T_{ijkl}=\sigma_{ijk;l}.
\end{equation}
Complete symmetry implies
$T_{11\beta1}=T_{111\beta}=0$ and
$T_{11\beta\gamma}=T_{1\beta\gamma1}=0$.
Thus every $h_{\beta l}$ vanishes, giving $\nabla e=0$.
Formula~\eqref{eq:6.1} then gives $\nabla\sigma=0$.
\end{proof}

\begin{proof}[Proof of Theorem~\ref{thm:main}]
Choose $t=8/25$ for $n=3$ and $t=\tau_n$ for $n\ge4$; in either
case $t\ge\tau_n$. Lemma~\ref{lem:derivative},
Proposition~\ref{prop:sextic}, and \eqref{eq:2.8} make all three
terms in \eqref{eq:2.7} nonnegative. Their integrals vanish, so each
term vanishes pointwise. In particular,
\[
 (1-S/\cn)S^2=0.
\]
Connectedness gives $S\equiv0$ or $S\equiv\cn$.

If $S=0$, the ambient Gauss formula shows that the real span of $F$
and $dF(TM)$ is a fixed $(n+1)$-plane. This plane is Lagrangian by
the Legendrian condition. The immersion into its great sphere is a
local isometry; compactness makes it a surjective covering, and simple
connectedness of $\Sph^n$ makes the covering one-sheeted. A unitary
transformation sends the plane to the standard real subspace.

Suppose henceforth that $S=\cn$. We also have $\mathcal B_t=0$.
The equality implication in Proposition~\ref{prop:sextic} gives
$\lambda_{\max}(G)=\alpha S$. Lemma~\ref{lem:gram} determines
the cubic, with independent components
\[
 \sigma_{111}=\frac{n-1}{\sqrt n},\qquad
 \sigma_{1\beta\gamma}=-\frac{\delta_{\beta\gamma}}{\sqrt n},
 \qquad \sigma_{11\beta}=\sigma_{\beta\gamma\delta}=0,
\]
after orienting its axis. In particular,
\[
 G=\operatorname{diag}(n-1,2/n,\ldots,2/n),\qquad
 V=\frac{(n-1)(n-2)(n+1)}{n\sqrt n}\,e^\flat.
\]
Since $n\ge3$, the one-form $V$ is nowhere zero, so
$e=V^\sharp/|V|$ defines a smooth global axis. Its cubic is exactly
\eqref{eq:6.1}. Lemma~\ref{lem:axis} now gives $\nabla e=0$ and
$\nabla\sigma=0$.

We identify tangent vectors with their images under $dF$. For $X\perp e$,
the ambient Gauss formula gives
\begin{equation}\tag{6.3}\label{eq:6.3}
 \begin{aligned}
 D_Ze&=-\frac1{\sqrt n}JZ+\sqrt n\,\xi(Z)Je-\xi(Z)F,\\
 D_ZX&=\nabla_ZX-
       \frac{\xi(Z)JX+\langle Z,X\rangle Je}{\sqrt n}
       -\langle Z,X\rangle F.
 \end{aligned}
\end{equation}
On the universal cover, the parallel one-form $\xi$ is exact; write
$\dd t=\xi$. Define
\begin{equation}\tag{6.4}\label{eq:6.4}
 q=e^{-i\sqrt n t}\frac{F-\sqrt n\,Je}{\sqrt{n+1}},\qquad
 \psi=e^{it/\sqrt n}\frac{\sqrt n F+Je}{\sqrt{n+1}}.
\end{equation}
For example,
$D_Z(F-\sqrt n\,Je)=i\sqrt n\,\xi(Z)(F-\sqrt n\,Je)$.
Differentiating both expressions therefore yields
\begin{equation}\tag{6.5}\label{eq:6.5}
 Dq=0,\qquad
 \dd\psi(Z)=\sqrt{\frac{n+1}{n}}\,e^{it/\sqrt n}
             (Z-\xi(Z)e).
\end{equation}
For $X\perp e$, a second differentiation gives
\begin{equation}\tag{6.6}\label{eq:6.6}
 D_Z(\dd\psi(X))
 =\dd\psi(\nabla_ZX)-\frac{n+1}{n}\langle Z,X\rangle\psi.
\end{equation}
Indeed, the phase derivative cancels the term
$-\xi(Z)JX/\sqrt n$ in \eqref{eq:6.3}; the remaining normal terms
combine into a multiple of $\sqrt n F+Je$. These identities hold
for arbitrary $Z$.

Before multiplication by their phase factors, both
$F-\sqrt n\,Je$ and $\sqrt n F+Je$ have squared norm $n+1$.
The real and imaginary parts of their Hermitian inner product vanish, using
$\langle F,Je\rangle=\langle JF,e\rangle=0$.
Their Hermitian products with a transverse vector $X$ vanish as well.
Multiplication by complex phases preserves these zero products.
Thus $q$ and $\psi$ are unit vectors, and $q$ is Hermitian orthogonal
to $\psi$ and $\dd\psi(TM)$.

By \eqref{eq:6.5}--\eqref{eq:6.6}, ambient derivatives of local fields spanning
$\operatorname{span}_{\R}\{\psi,\dd\psi(e^\perp)\}$ remain in this
span. It therefore defines a fixed real $n$-plane on the connected
universal cover. The Legendrian condition gives
\[
 \langle J\psi,\dd\psi(X)\rangle=0,\qquad
 \langle J\dd\psi(X),\dd\psi(Y)\rangle=0
 \quad(X,Y\perp e).
\]
Hence this real plane is Lagrangian in the complex orthogonal
complement of $q$. A fixed unitary transformation sends $q$ to the
first coordinate vector and the plane to $\R^n\subset\C^n$.
After this transformation, we identify $\psi$ with its last $n$
coordinates, so that $\psi$ takes values in
$\Sph^{n-1}\subset\R^n$. Let $\widetilde F$ denote the lift of $F$ to
the universal cover. Solving \eqref{eq:6.4} gives
\[
 \widetilde F(p)=\left(
 \frac{e^{i\sqrt n\,t(p)}}{\sqrt{n+1}},
 \sqrt{\frac n{n+1}}e^{-it(p)/\sqrt n}\psi(p)\right).
\]
Hence the lifted immersion has image in \eqref{eq:1.1}, with parameters
$\theta=t(p)/\sqrt n$ and $x=\psi(p)$.

We determine the global identifications of the model. Its induced
metric is
\[
 n\,\dd\theta^2+\frac n{n+1}g_{\Sph^{n-1}},
\]
so it is an immersion. Two parameters $(\theta,x)$ and $(\varphi,y)$
have the same image exactly when
\[
 e^{in(\varphi-\theta)}=1,\qquad
 y=e^{i(\varphi-\theta)}x.
\]
Since $x,y$ are real unit vectors, $\varphi-\theta\in\pi\mathbb Z$.
Modulo $2\pi$, the only extra identification is
$(\theta,x)\sim(\theta+\pi,-x)$ for even $n$.
The map on the stated compact quotient is injective, hence an
embedding. Call its connected image $C_n$. The original immersion
has image in $C_n$ and is a local diffeomorphism onto it. Its image
is open and, by compactness, closed, so it equals $C_n$.
A proper local diffeomorphism is a covering, and compactness makes
the covering finite.

Finally, the model is minimal Legendrian and attains the pinching value. For \eqref{eq:1.1}, take
$e=F_\theta/\sqrt n$ and unit transverse vectors
$e_\beta=(0,e^{-i\theta}v_\beta)$, where the $v_\beta$ are
orthonormal tangent vectors to $\Sph^{n-1}$. The contact form vanishes:
\[
 \langle JF,F_\theta\rangle=\frac n{n+1}-\frac n{n+1}=0,
 \qquad \langle JF,\dd F(v)\rangle=0.
\]
Differentiation gives
\[
 \langle D_ee,Je\rangle=\frac{n-1}{\sqrt n},\qquad
 \langle D_ee_\beta,Je_\gamma\rangle
 =-\frac{\delta_{\beta\gamma}}{\sqrt n},
\]
while $\sigma_{11\beta}$ and $\sigma_{\beta\gamma\delta}$ vanish.
By symmetry, equivalently,
\[
 B(e,e)=\frac{n-1}{\sqrt n}Je,\qquad
 B(e,e_\beta)=-\frac1{\sqrt n}Je_\beta,\qquad
 B(e_\beta,e_\gamma)=-\frac{\delta_{\beta\gamma}}{\sqrt n}Je.
\]
Consequently,
\[
 \tr B=0,\qquad
 |B|^2=\frac{(n-1)^2+3(n-1)}n=\cn.
\]
\end{proof}
\vspace{2mm}
\noindent\textbf{Acknowledgements.} Yong Luo is supported by the National Natural Science Foundation of China (Grant Nos. 12671062, 12571055). Linlin Sun acknowledges support from the National Natural Science Foundation of China (Grant Nos. 12571055 and 12671062), the Natural Science Foundation of Hunan Province (Grant No. 2026JJ20014), the 111 Project (Grant No. D23017), and the Program for Science and Technology Innovative Research Team in Higher Educational Institutions of Hunan Province, China. Jiabin Yin was supported by the National Natural Science Foundation of China (Grant No. 12201138) and the Natural Science Foundation of Henan Province, China (Grant No. 262300421869).
\vspace{2mm}

\noindent\textbf{AI assistance statement.} The authors used AI models to assist in the preparation of this paper. The authors take full responsibility for the content and final verification of the manuscript.

\end{document}